\theoremstyle{plain}
\newtheorem{theorem}{Theorem}
\newtheorem{proposition}{Proposition}
\newtheorem{lemma}{Lemma}
\newtheorem{corollary}{Corollary}
\theoremstyle{definition}
\newtheorem{remark}{Remark}
\newtheorem{example}{Example}
\newtheorem{optimisationproblem}{Optimisation Problem}
\def\1{\mathds{1}} 
\renewcommand\d[1]{\ensuremath{\;\mathrm{d}#1\@ifnextchar\d{\!}{}}} 
\newcommand\nopagebreakhere{\par\nobreak\@afterheading}  
\newcommand{\de}{\mathrm{\,d}}
\newcommand{\R}{\mathbb{R}}
\newcommand{\ind}{\mathds{1}}
\newcommand{\Var}{\mathrm{Var}}
\newcommand{\supp}{\mathop{\mathrm{supp}}}
\title{The exact region and an inequality between Chatterjee's and Spearman's rank correlations
}
\author{%
  Jonathan Ansari\textsuperscript{*}
  \and
  Marcus Rockel\textsuperscript{$\dagger$}
}
\begin{document}

\maketitle
\begin{center}
  \small\textit{%
    \textsuperscript{*}Department of Mathematics,\\
    University of Salzburg,\\
    Hellbrunner Straße 34, 5020 Salzburg, Austria,\\
    \texttt{jonathan.ansari@plus.ac.at}
  }\\[2mm]
  \small\textit{%
    \textsuperscript{$\dagger$}Department of Quantitative Finance,\\
    Institute for Economics, University of Freiburg,\\
    Rempartstr. 16, 79098 Freiburg, Germany,\\
    \texttt{marcus.rockel@finance.uni-freiburg.de}
  }
\end{center}
\begin{abstract}
The rank correlation \(\xi(X,Y),\) recently established by Sourav Chatterjee and already popular in the statistics literature, takes values in \([0,1],\) where \(0\) characterises independence of \(X\) and \(Y,\) and \(1\) characterises perfect dependence of \(Y\) on \(X.\)
Unlike concordance measures such as Spearman’s \(\rho,\) which capture the degree of positive or negative dependence, \(\xi\) quantifies the strength of functional dependence.
In this paper, we study the attainable set of pairs \((\xi(X,Y),\rho(X,Y))\).
The resulting \(\xi\)-\(\rho\)-region is a convex set whose boundary is characterised by a novel family of absolutely continuous, asymmetric copulas having a diagonal band structure.
Moreover, we prove that \(\xi(X,Y)\le |\rho(X,Y)|\) whenever \(Y\) is stochastically increasing or decreasing in \(X,\) and we identify the maximal difference \(\rho(X,Y)-\xi(X,Y)\) as exactly \(0.4.\) Our proofs rely on a convex optimisation problem under various equality and inequality constraints, as well as on ordering properties for \(\xi\) and \(\rho.\)
Our results contribute to a better understanding of Chatterjee’s rank correlation, which typically yields substantially smaller values than Spearman's rho when quantifying positive dependencies. In particular, when interpreting the values of Chatterjee’s rank correlation on the scale of \(\rho\), the quantity \(\sqrt{\xi}\) appears to be more appropriate.
\end{abstract}

\vspace{0ex}
\textbf{Keywords } Asymmetric copula, Chatterjee's $\xi$, convex optimization, copula, positive dependence, Spearman's \(\rho\), Schur order, stochastically increasing.

\section{Introduction and main results}\label{sec1}

The most famous rank correlations are certainly Spearman's rho and Kendall's tau. They attain values in the interval \([-1,1],\) where the values \(+1\) and \(-1\) characterise perfect positive and negative dependence, respectively. 
Given a bivariate random vector \((X,Y)\) with continuous marginal distribution functions \(F_X\) and \(F_Y,\) Spearman's rho is defined as the Pearson correlation of \((F_X\circ X,F_Y \circ Y).\) 
It admits a representation in terms of the copula \(C\) of \((X,Y)\) through
\begin{align}\label{frm_rho_integral}
    \rho(X,Y)=\rho(C) = 12 \int_{[0,1]^2} C(u,v) \de \lambda^2(u,v) - 3;
\end{align}
see, e.g., \cite[Theorem~5.1.6]{Nelsen-2006}.
Both Spearman's rho and Kendall's tau are measures of concordance and thus quantify the strength of positive or negative dependence between \(X\) and \(Y.\) 

Recently, \citet{chatterjee2020} and \citet{chatterjee2021} introduced a new rank correlation that has attracted a lot of attention in the statistics literature; see, e.g., \cite{auddy2021,bickel2022,bickel2020,bernoulli2021,fan2022A,deb2020b,han2022limit,shi2021,Strothmann-2022,wiesel2022}.  
In the bivariate case, the population version of Chatterjee's xi is defined by
\begin{align}\label{eqchatt}
    \xi(X,Y) := \frac{\int_{\mathbb{R}} \Var(P(Y\geq y\mid X)) \de P^Y(y) }{\int_{\mathbb{R}} \Var(\1_{\{Y\geq y\}}) \de P^Y(y)};
\end{align}
see also \cite{Gamboa-Klein-Lagnoux-2018}.
The quantity \(\xi\) takes values in the interval \([0,1],\) where the value \(0\) characterises independence of \(X\) and \(Y\), while the value \(1\) is attained if and only if \(Y\) perfectly depends on \(X,\) i.e., there exists a measurable function \(f\) (that is not necessarily increasing or decreasing) such that \(Y = f(X)\) almost surely.
The motivation for considering \(\xi\) is to quantify the strength of (functional) dependence of \(Y\) on \(X\), which is useful in many respects.
First, in contrast to measures of concordance such as \(\rho\), Chatterjee’s xi is able to detect non-monotone relationships, which is of fundamental importance for statistical modeling.
Moreover, multivariate generalizations of \(\xi\) satisfy additional remarkable properties, such as information monotonicity and characterisation of conditional independence.
These properties enable important applications, including model-free feature ranking, forward feature selection, and clustering methods; see, e.g., \cite{chatterjee2021,Ansari-Fuchs-2022,Fuchs-Wang-2024,deb2020b}.
Noting that \(\xi\) has a fast (nearest-neighbor-based) estimator, it is arguably the simplest dependence measure with this combination of properties; see \cite{Chatterjee-2024} for an overview.

In the case of continuous marginal distribution functions, Chatterjee's xi depends only on the underlying copula \(C\), and it coincides with the Dette-Siburg-Stoimenov measure in \cite{dette2013copula}, i.e.,
\begin{align}\label{repxicop}
   \xi(X,Y) = \xi(C) := 6 \int_0^1 \int_0^1 (\partial_1 C(t,v))^2 \de t \de v - 2.
\end{align}
The behaviour of \(\xi\) is fundamentally different from that of Spearman's rho; see Fig.~\ref{fig:xi_and_rho}. If \(X\) and \(Y\) are independent, then both measures are zero. 
Similarly, if \(Y = \pm X,\) then \(\rho(X,Y) = \pm 1\) and \(\xi(X,Y) = 1.\)
However, it is possible that \(\xi\) attains the maximal value \(1\) in the same situation where \(\rho\) is \(0\); for example, when \(X\) is standard normal and \(Y = |X|.\)
While the properties of Spearman's rho are well-studied in the literature, there are still several open questions concerning the behaviour and interpretation of \(\xi.\)
Noting that \(\xi\) and \(\rho\) quantify fundamentally different aspects of dependence, natural questions concern how they are related and how far they can differ.
More precisely, given the value of \(\xi(X,Y)\), it is important to know which values \(\rho(X,Y)\) can attain.
Vice versa, one may ask how weak or strong the dependence between \(Y\) and \(X\) can be in terms of \(\xi\) when the value of \(\rho\) is fixed.
As we illustrate in Fig.~\ref{fig:rho_xi_comparison}, a small value of \(|\rho|\) can be misleading, since a strong functional relationship may still be present.
By contrast, a large value of \(|\rho|\) indicates a strong monotone association, also reflected in a large value of \(\xi\).
In other words, \(\xi(X,Y)\) quantifies the strength of functional dependence of \(Y\) on \(X\) \cite[Theorem~1.1]{chatterjee2020}, whereas \(\rho\) is sensitive only to monotone dependencies.

In the following theorem, which is our first main result, we answer these questions by determining the exact \(\xi\)-\(\rho\)-region. As illustrated in Fig.~\ref{fig:attainable_rho_xi_region}, for \(\xi = 1,\) the set of possible values for \(\rho\) is the whole interval \([-1,1].\) If \(\xi\) decreases, the attainable interval for \(\rho\) shrinks until \(\rho = 0\) for \(\xi = 0.\) 
The boundary of the \(\xi\)-\(\rho\)-region is uniquely described by a new family of stochastically monotone, asymmetric, and absolutely continuous copulas with a diagonal band structure; see Fig.~\ref{fig:C_x}. We discuss this copula family in more detail in Section~\ref{sec4}. 
We denote by \(\mathcal{C}\) the set of bivariate copulas.

\begin{theorem}[Exact \(\xi\)-\(\rho\)-region]\label{thexirho}
The exact \(\xi\)-\(\rho\)-region is
\begin{align}\label{eqxirhoregion}
    \mathcal{R}:= \bigl\{(\xi(C),~\rho(C)) : C\in\mathcal{C}\bigr\}
  \;=\;
  \bigl\{(x,y)\in\mathbb{R}^2 : x\in[0,1],\; |y|\le M_x\bigr\},
\end{align}
where
\begin{equation}\label{eq:b_and_M}
\begin{aligned} 
M_x = 
\begin{cases}
0,
   & x=0,\\[1.5ex]
b_x-\dfrac{3b_x^2}{10}, 
 \phantom{1\dfrac{1}{2b_x^2}+\dfrac{1}{5b_x^3}}  b_x = \dfrac{\sqrt{6x}}{2\cos\left(\tfrac13\arccos\bigl(-\tfrac{3\sqrt{6x}}{5}\bigr)\right)},
   & 0<x\le\tfrac{3}{10},\\[4ex]
1-\dfrac{1}{2b_x^2}+\dfrac{1}{5b_x^3}, 
\phantom{b_x\dfrac{3b_x^2}{10}} b_x = \dfrac{5+\sqrt{5(6x-1)}}{10(1-x)}, 
  & \tfrac{3}{10}<x<1.\\
 1 & x = 1,
\end{cases} 
\end{aligned}
\end{equation}

The set \(\mathcal{R}\) is convex and its boundary is described by the copula family $(C_b)_{b\in{\mathbb{R}}\setminus \{0\}}$ defined in \eqref{eq:C_x} and \eqref{defcopbneg}.
More precisely, for \(x\in (0,1)\) and for \(b = b_x\) in \eqref{eq:b_and_M},
the copula \(C_{\pm b}\) is the unique copula \(C\in \mathcal{C}\) with \(\xi(C) = x\) and \(\rho(C) = \pm M_x.\)
\end{theorem} 

Elementary calculations show that the function \(x\mapsto M_x-x\) is continuous on \([0,1]\), strictly increasing on \([0,3/10]\), and strictly decreasing on \([3/10,1].\) Hence,
the maximal difference \(\rho(C)-\xi(C)\) over all bivariate copulas \(C\) is attained by the copula \(C_b\) for \(b = b_{0.3} = 1.\) 
Interestingly, the values of \(\rho\) and \(\xi\) for this copula only have one decimal place; see Proposition \ref{propcfe} for closed-form expressions.

\begin{corollary}[Global $\rho-\xi$-maximum]\label{cor:max-rho-minus-xi}
We have \(\max \{\rho(C)-\xi(C) \mid C\in\mathcal{C} \} =  0.4,\)
and the unique maximiser is the copula $C_{1}$ in \eqref{eq:C_x} with \(\rho(C_1) = 0.7\) and \(\xi(C_1) = 0.3.\) 
\end{corollary}

\begin{figure}[t!]
\centering
\includegraphics[width=0.7\textwidth, trim= 0mm 15mm 00mm 25mm, clip]{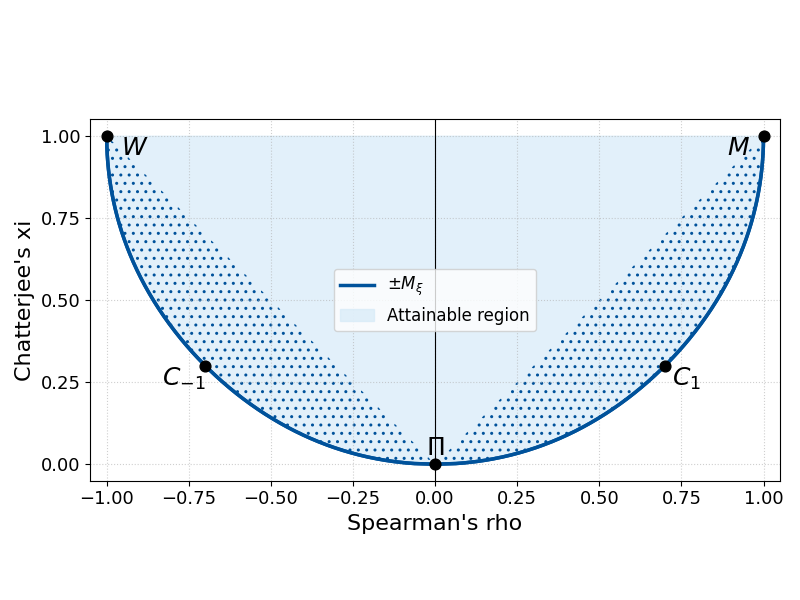}
\caption{
Illustration of the exact \(\xi\)-\(\rho\)-region \(\mathcal{R}\) (transposed) in \eqref{eqxirhoregion}, where stochastically increasing (decreasing) copulas are located in the right (left) scattered area; see Theorem~\ref{thm:rho_ge_xi}. The right and left boundary \((\pm M_\xi,\xi)\) of the region is described by the copula family \((C_b)_{b\in {\mathbb{R}}\setminus\{0\}}\) with limiting cases \(\Pi(u,v) := uv\) for \(b\to 0,\) \(M(u,v):= \min\{u,v\}\) for \(b\to \infty,\) and \(W(u,v) := \max\{u+v - 1,0\}\) for \(b\to -\infty;\) see Theorem~\ref{thexirhooptimisation} and Remark \ref{remsymm}~\ref{remsymma}.
  }
\label{fig:attainable_rho_xi_region}
\end{figure}

Measures of concordance such as Spearman's rho and Kendall's tau are increasing in the pointwise order of copulas; see, e.g., \cite[Remark 6.3]{Rueschendorf-2013}. The uniquely determined maximal (minimal) element in the pointwise order is the upper (lower) Fr\'{e}chet copula \(M\) (\(W\)), which models perfect positive (negative) dependence. Exactly in this case, \(\rho\) attains the value \(+1\) (resp. \(-1\)). 
In contrast, Chatterjee's rank correlation is increasing with respect to the Schur order for copula derivatives recently investigated in \cite{Ansari-Rueschendorf-2021,Ansari-Rockel-2023}. 
The global minimal element of this order is the independence copula \(\Pi\), whereas maximal elements are perfect dependence copulas.
If \(Y\) is stochastically increasing (SI) in \(X\) (i.e., for all \(y\), \(P(Y\geq y \mid X=x)\) is increasing in \(x\)), then the pointwise order and the Schur order for copulas coincide; see \cite[Lemma 2.6 (ii)]{Ansari-Rockel-2023}. Since SI is a positive dependence concept, in this case both \(\xi\) and \(\rho\) quantify the strength of positive dependence. A natural question is then how the values of \(\xi\) and \(\rho\) are related assuming that the underlying copula \(C\) is SI (i.e., for \((X,Y)\sim C\), \(Y\) is SI in \(X\)).

Due to the following theorem---our second main result---Chatterjee's xi is for SI copulas always smaller than Spearman's rho except for the case of independence or perfect positive dependence, where their values coincide. 
By symmetry, a similar statement holds true for stochastically decreasing (SD) copulas (where \(C\) is SD if, for \((X,Y)\sim C,\) \(Y\) is SI in \(-X\)).
For an illustration of this result, we refer to Fig.~\ref{fig:attainable_rho_xi_region}, where the SI and SD copulas form a subset of the scattered area.

\begin{theorem}[SI implies $\xi\le \rho$]\label{thm:rho_ge_xi}
If the copula $C$ is SI or SD, then
\begin{align}\label{eqthm:rho_ge_xi}
    \xi(C)\;\le\;|\rho(C)|,
\end{align}
with equality if and only if $C\in\{W,\,\Pi,\,M\}.$
\end{theorem}

\begin{remark}\label{remmain}
\leavevmode
\begin{enumerate}[label=(\alph*)]
    \item The proof of Theorem~\ref{thexirho} is based on solving a convex maximisation problem over a family of conditional distribution functions; see Optimisation Problem \ref{optprob} and Theorem~\ref{thexirhooptimisation}. Key is the representation of \(\rho(C)-\xi(C)\) as an integral over the partial copula derivative \(h_v(t) := \partial_1 C(t,v);\) see \eqref{optprob1a}. In particular, we exploit the fact that Spearman’s rho can be expressed as a functional of the first partial derivative of the copula, namely
    \begin{align}\label{reprhocopder}
        \rho(C) = 12 \int_0^1 \int_0^1 (1-t) \, \partial_1 C(t,v) \de t \de v - 3;
    \end{align}
    combine \eqref{frm_rho_integral} and \eqref{reprho2}. We refer to \cite{Fredricks-Nelsen-2007} for a related representation of \(\rho\) involving partial derivatives with respect to both components.
    
    For the proof of Theorem~\ref{thm:rho_ge_xi}, we use a maximum principle and show that the difference \(\rho(C)-\xi(C)\) over all SI copulas is minimal for the extreme cases of independence and perfect positive dependence. These cases correspond to the copulas \(C = \Pi\) and \(C= M,\) respectively, for which the difference vanishes.
    \item For any SI copula \(C\), it is well known that \(\rho(C) \geq \tau(C) ,\) where \(\tau(C) = 4 \int C(u,v) \de C(u,v) - 1\) is the copula-based version of Kendall's tau, see, e.g., \cite[Theorem~5.2.8]{Nelsen-2006}. Due to Theorem~\ref{thm:rho_ge_xi}, we have \(\rho(C) \geq \xi(C).\) Simulations indicate that even \(\tau(C)\geq \xi(C)\) holds true; see also Fig. \ref{fig:differences_curves} and \cite{Ansari-Rockel-2023}. However, we were not able to prove this conjecture noting that Kendall's tau admits the representation \(\tau(C)= 1 - 4 \int_0^1\int_0^1 \partial_1 C(u,v) \partial_2 C(u,v) \de u\de v,\) which has partial derivatives of \(C\) with respect to both components.
    \item On the one hand, the maximal difference between Spearman's rho and Chatterjee's xi due to Corollary \ref{cor:max-rho-minus-xi} seems surprisingly large.
    The unique maximiser \(C_1\) is an SI copula, so that both \(\rho\) and \(\xi\) quantify the strength of positive dependence of \(C_1.\)
    In this respect, the value \(\rho(C_1) = 0.7\) suggests a rather strong positive dependence structure between \(X\) and \(Y\) while \(\xi(C_1) = 0.3\) could be interpreted as a rather weak dependence of \(Y\) on \(X.\)
    On the other hand, there are several classical copula families with parameters for which the difference \(\rho-\xi\) is close to the maximal value \(0.4;\) see Table \ref{tab:rho_minus_xi_max} and Fig. \ref{fig:differences_curves} as well as \cite{Ansari-Rockel-2023}.
    From this point of view, one might expect the maximal difference to be larger.
    Considering the representations~\eqref{repxicop} and~\eqref{reprhocopder}, there is a simple explanation for why \(\rho(C)\) attains substantially larger values than \(\xi(C)\) in the case of stochastically monotone copulas:
The copula derivative \(\partial_1 C\) enters the representation of \(\rho(C)\) linearly, whereas it appears squared in the representation of \(\xi(C)\).
From this perspective, considering \(\sqrt{\xi(C)}\) rather than \(\xi(C)\) may be more appropriate, since it leads to values that are closer to those of \(\rho(C)\) for stochastically monotone relationships.
However, in the statistical literature and in practice, almost exclusively the quantity \(\xi\) is considered; see the references cited above and, for instance, the \textsf{R} packages \texttt{FOCI} \cite{FOCI}, \texttt{XICOR} \cite{XICOR}, and \texttt{didec} \cite{didec}.
We therefore focus on \(\xi\) and document the substantial differences between the values of \(\xi\) and \(\rho\) in our work.
    \item Corollary \ref{cor:max-rho-minus-xi} also yields that, if \(\xi(X,Y)\) is larger than \(|\rho(X,Y)|,\) then \(Y\) is not stochastically increasing or decreasing in \(X.\) As a consequence, in this case, \(Y\) does not follow a linear model \(a + bX + \varepsilon\) with \(\varepsilon\) independent of \(X.\)
    \item A comparison of the rank-based estimators $\xi_n$ (for Chatterjee's xi) and $\rho_n$ (for Spearman's rho) is studied by \cite{Zhang-2025}. Extremal differences are obtained for specific permutations of the ranks, where, for a sample size of $n = 100,$ an example is constructed with $\xi_n \approx 0.058$ and $\rho_n \approx 0.753$; see \cite[Extremal case 2]{Zhang-2025}. Similar differences also exist for arbitrarily large finite samples.
    In contrast, since both $\xi_n$ and $\rho_n$ are strongly consistent estimators, we have \(\rho_n-\xi_n \xrightarrow[n\to\infty]{a.s.} \rho-\xi \leq 0.4\) due to Corollary \ref{cor:max-rho-minus-xi}. Hence, Zhang’s construction illustrates an extreme finite-sample phenomenon, which our result complements by showing that the gap cannot be larger than $0.4$ in the limit. 
    \item \label{remmainF} It is worth mentioning that Chatterjee's rank correlation admits a representation in terms of Spearman's footrule via \(\xi(C) = \psi(C^T\ast C)\) (see e.g. \cite{fuchs2021quantifying}), where Spearman's footrule is defined by \(\psi(D) = 6 \int_0^1 D(t,t) \de t-2\). 
    Here, \(D\ast E\,(u,v) := \int_0^1 \partial_2 D(u,t)\, \partial_1 E(t,v)\de t\) denotes the Markov product of two bivariate copulas \(D\) and \(E\), and \(D^T(u,v) := D(v,u)\) is the transposed copula. Hence, properties of Chatterjee’s rank correlation are closely related to properties of Markov products.
    Further, Chatterjee’s xi can be viewed as a dependence measure for two variables based on a proportional reduction in variation.
Measures of this type are closely related to the coefficient of determination \(R^2\) in simple linear regression, where \(R^2\) is given by the square of the correlation coefficient.
This connection is explicitly discussed in \cite{Shih-2021,Shih-Emura-2025}.
In particular, \cite[Section~2.4.3]{Agresti-2013} establishes a link between \(R^2\) and quantities of the form
\((V(Y) - \mathbb{E}[V(Y | X)])/V(Y)\), where \(V(Y)\) denotes the Shannon entropy of \(Y\).
Moreover, \cite[Exercise~2.39]{Agresti-2013} discusses Goodman--Kruskal’s tau for two nominal categorical variables, corresponding to the choice of \(V(Y)\) as a quadratic entropy measure.
\end{enumerate}
\end{remark}

\begin{table}[tb]
  \caption{
  Parameter values that (approximately) maximise the gap $\rho-\xi$ for the listed SI/SD-copula families, together with the corresponding values for Spearman's rho, Chatterjee's xi, and the square root of Chatterjee's xi.
  The entries for the Gaussian, the Marshall-Olkin and the diagonal band copula family $(C_b)_{b\in{\mathbb{R}}\setminus\{0\}}$ are obtained from closed-form expressions for \(\rho\) and \(\xi\); see \cite{fuchs2021quantifying} and \cite[Table 6]{Ansari-Rockel-2023}. The entries for the remaining copula families are computed via a dense grid search over the parameter space, combined with stochastic estimations of $\rho$ and $\xi$. More precisely, for each copula family, we consider 200 different parameter values, and for each parameter value, we generate $2\,000\,000$ data points by leveraging the fast sampling algorithm for completely monotone Archimedean copulas as in \cite[Alg.~7.53]{Embrechts-2015}.
  Unlike the other families considered, the Marshall–Olkin copula has a singular component, which may explain its comparatively higher value of \(\xi\).
  }
  \centering
\begin{tabular*}{\textwidth}{@{\extracolsep{\fill}} lcccccc @{}}
  \toprule
  Family & Param. & $\rho$ & $\xi$ & $\sqrt{\xi}$ & $\rho-\xi$ 
  \\
  $(C_b)_{b\in\mathbb{R}\setminus\{0\}}$
    & \(1\phantom{.000}\)
    & \(0.7\phantom{00}\)
    & \(0.3\phantom{00}\)
    & \(0.548\)
    & \(0.4\phantom{00}\) 
    \\
  Frank
    & \(5.529\)
    & \(0.682\)
    & \(0.299\)
    & \(0.547\)
    & \(0.383\) 
    \\
  Gaussian
    & \(1/\sqrt{2}\)
    & \(0.690\)
    & \(0.310\)
    & \(0.557\)
    & \(0.380\) 
    \\
  Gumbel--Hougaard
    & \(1.991\)
    & \(0.681\)
    & \(0.313\)
    & \(0.559\)
    & \(0.367\) 
    \\
  Clayton
    & \(1.998\)
    & \(0.682\)
    & \(0.335\)
    & \(0.579\)
    & \(0.347\) 
    \\
  Marshall-Olkin (\(\alpha_1=1\))
    & \(0.5\phantom{00}\)
    & \(0.6\phantom{00}\)
    & \(0.4\phantom{00}\)
    & \(0.632\)
    & \(0.2\phantom{00}\) 
    \\
  \bottomrule
\end{tabular*}
  \label{tab:rho_minus_xi_max}
\end{table}

\begin{figure}[t!]
    \centering
    \includegraphics[width=0.99\textwidth, trim={0 15 0 0}, clip]{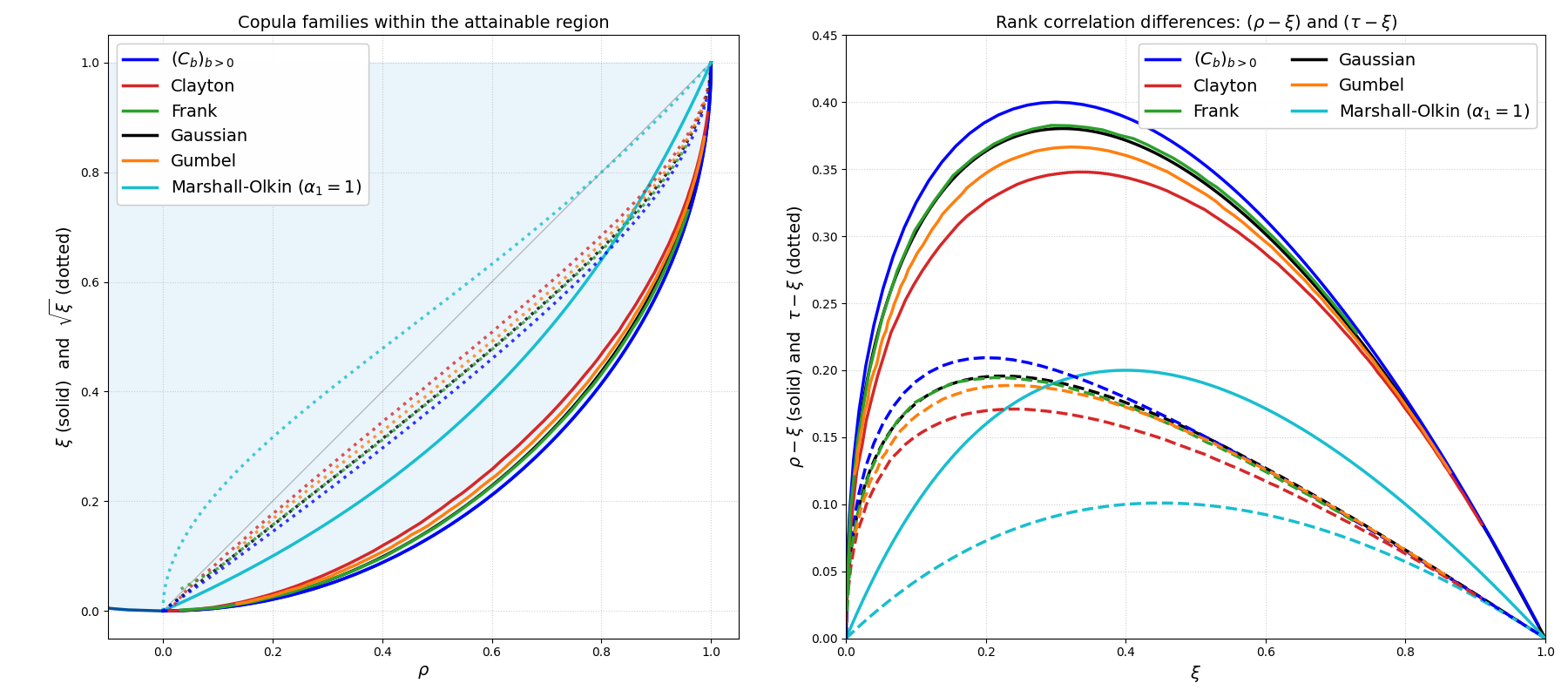}
    \caption{
        Left: $(\rho, \xi)$ (solid) and $(\rho, \sqrt{\xi})$ (dotted) for classical stochastically increasing copula families. Symmetric copula families are more similar to the boundary copula $(C_b)_{b\in\R\setminus\{0\}}$, whereas the asymmetric Marshall-Olkin copula with $\alpha_1=1$ and varying $\alpha_2$ is closer to the diagonal. It in particular is an example of an SI copula family where in general $\sqrt{\xi} \not\le \rho$. 
        Right: Comparison of rank correlations for well known SI copula families.
        The curves show the difference between Spearman's rho (solid lines) or Kendall's tau (dashed lines) and Chatterjee's xi, plotted against Chatterjee's xi.
        The solid blue curve represent the copula family $(C_b)_{b>0}$ defined in \eqref{eq:C_x}, which yields the maximum possible difference between Spearman's rho and Chatterjee's xi. 
    }
    \label{fig:differences_curves}
\end{figure}

Due to Theorem~\ref{thm:rho_ge_xi}, \(\xi\) does not exceed \(\rho\) under the SI property. As the following example shows, this is no longer true if the underlying copulas satisfy the weaker positive dependence concept PLOD. Recall that a bivariate copula \(C\) is said to be positive lower orthant dependent (PLOD) if \(C(u,v) \geq uv\) for all \((u,v)\in [0,1]^2.\)

\begin{example}[PLOD is not sufficient for \(\xi(C)\le \rho(C)\)]\label{exPLOD}~\\
    Let \(U\) be uniform on \((0,1)\) and let \(f\colon [0,1]\to [0,1]\) be defined by \(f(u) = u\) for \(u\in [0,1/4]\cup (3/4,1],\) \(f(u) = u+1/4\) for \(u\in (1/4.1/2],\) and \(f(u) = u-1/4\) for \(u\in (1/2,3/4].\) Then \(V:= f(U)\) is uniform on \([0,1]\) and \(C(u,v):=P(U\le u, V\le v)\geq uv\) for all \((u,v)\in [0,1]^2.\) Hence, \(C\) is PLOD. Further, \(\xi(C)=1\) because \(V\) perfectly depends on \(U.\) However, \(\rho(C) = 13/16.\)
\end{example}

Studying the exact region for pairs of measures of association is not new. For the most popular measures of concordance---Spearman's rho and Kendall's tau---the exact region has been determined in \cite{Durbin-Stuart-1951} and \cite{Schreyer-2017}. Combinations with various other measures of (weak) concordance such as Blomqvist's beta, Spearman's footrule or Gini's gamma have been established in \cite{Bukovsek-2022,Kokol-2023,Kokol-2024,Kokol-2021}; see \cite[Table 1]{Tschimpke-2025} for an overview.
In contrast, and to the best of our knowledge, the behavior of Chatterjee’s rank correlation with respect to classical concordance measures has not been analysed so far.
Such a comparison is particularly important, however, as Chatterjee’s rank correlation is a rather exceptional dependence measure, for which an interpretation of its values is still challenging for practitioners. 
In empirical studies it is observed that Chatterjee's rank correlation is ``on average \(45-50~\%\) smaller than Pearson's''; see \cite{sierra2025revised}. Similar findings have been reported in comparisons with Spearman’s rank correlation by \cite{Behnam-2022}.
Our results for SI copulas support these empirical studies; specifically, Theorem \ref{thm:rho_ge_xi} shows that \(\xi\) does not exceed \(|\rho|\) for SI and SD copulas.
Inequalities for positive dependencies and, in particular, for SI copulas are of significant interest and have been studied extensively in the literature; see, for example, the so-called Hutchinson-Lai conjecture concerning the relationship between Spearman's rho and Kendall's tau (\cite{Balakrishnan-2009,Munroe-2010,Hurlimann-2003}).

The rest of the paper is organised as follows. Section~\ref{sec2} recalls the basic dependence orderings and dependence concepts that we use in this paper. In Section~\ref{sec3}, we study the optimisation problems underlying our main results. In particular, we define the new copula family \((C_b)_{b\in {\mathbb{R}}\setminus \{0\}}\) that describes the boundary of the \(\xi\)-\(\rho\)-region due to Theorem~\ref{thexirho}.
In Section \ref{sec4}, we show that the new copula family has diagonal-band structure and we study various of its properties. 
The proof of Theorem~\ref{thexirho}, together with the proofs of all its auxiliary results
(Lemmas~\ref{charSIcop}--\ref{lemCbcop}, Propositions~\ref{remcopfam}--\ref{propcfe},
and Theorem~\ref{thexirhooptimisation}), is provided in Section~\ref{sec5}.
The proof of Theorem~\ref{thm:rho_ge_xi} is given in Section~\ref{sec6}.

\section{Basic dependence concepts}\label{sec2}

A function \(C\colon [0,1]^2 \to [0,1]\) is a (bivariate) copula if it is grounded (i.e., \(C(u,0) = 0 = C(0,u)\) for all \(u\)), has uniform marginals (i.e., \(C(u,1)=u=C(1,u)\) for all \(u\)), and is two-increasing (i.e., \(C(u,v)+C(u',v')-C(u,v')-C(u',v)\geq 0\) for all \(u\le u'\) and \(v\le v'\)). Due to Sklar's theorem, every continuous bivariate distribution function \(F\) can uniquely be decomposed into its marginal distribution functions \(F_1\) and \(F_2\) and a copula \(C\) such that
\begin{align}\label{eqSklar}
    F(x,y) = C(F_1(x),F_2(y)), \quad (x,y)\in {\mathbb{R}}^2.
\end{align}
Vice versa, for every copula \(C\) and for all univariate distribution functions \(F_1\) and \(F_2,\) the function \(F\) in \eqref{eqSklar} defines a bivariate distribution function. Hence, the copula \(C\) models the dependence structure of \(F.\) Since Chatterjee's xi and Spearman's rho are invariant under strictly increasing transformations, they depend only on the underlying copula. We refer to \cite{Durante-2016} and \cite{Nelsen-2006} for an overview of the concept of copulas.

The proof of Theorem~\ref{thexirho} is based on solving Optimisation Problem \ref{optprob} (see Section \ref{sec3}), which makes use of the following lemmas.

\begin{lemma}[A characterisation of copulas]\label{charSIcop}
    A function \(C\colon [0,1]^2 \to [0,1]\) is a bivariate copula if and only if there exists a family \((h_v)_{v\in [0,1]}\) of measurable functions \(h_v\colon [0,1]\to [0,1]\) such that
    \begin{enumerate}[label=(\roman*), font=\upshape]
        \item \label{charSIcop1} \(C(u,v) = \int_0^u h_v(t) \de t\) for all \((u,v)\in [0,1]^2,\)
        \item \label{charSIcop3} \(\int_0^1 h_v(t) \de t = v\) for all \(v\in [0,1],\)
        \item \label{charSIcop2} \(h_v(t)\) is increasing in \(v\) for all \(t\in [0,1].\)
    \end{enumerate}
\end{lemma}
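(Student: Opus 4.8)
The plan is to identify the family $(h_v)_{v\in[0,1]}$ with the partial copula derivative, $h_v(t)=\partial_1 C(t,v)$, interpret (iii) as the fact that these sections are conditional distribution functions in $v$, and then prove the two implications separately. The "if" direction is a direct verification of the copula axioms from (i)--(iii), while the "only if" direction requires producing a version of the derivative that is monotone in $v$ \emph{for every} $t$, which is the delicate point.

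For the "only if" direction, fix a copula $C$ and observe first that, for each fixed $v$, the section $u\mapsto C(u,v)$ is non-decreasing and $1$-Lipschitz: both follow from two-increasingness by comparison with the marginal sections, using $C(u,0)=0$ for monotonicity and $C(u,1)=u$ for the Lipschitz bound. Hence this section is absolutely continuous with $C(0,v)=0$, so $C(u,v)=\int_0^u\partial_1 C(t,v)\de t$ with $0\le\partial_1 C(t,v)\le 1$ for almost every $t$; this gives (i), and the case $u=1$ together with the uniform margin $C(1,v)=v$ gives (ii). Two-increasingness also shows, for $v\le v'$, that $u\mapsto C(u,v')-C(u,v)$ is non-decreasing, whence $\partial_1 C(t,v)\le\partial_1 C(t,v')$ for almost every $t$.

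The one genuinely delicate point---and the step I expect to be the main obstacle---is that $\partial_1 C(\cdot,v)$ is defined only up to a $v$-dependent null set, whereas (iii) demands monotonicity in $v$ for every $t$. I would resolve this by a standard regularization. Choose a countable dense set $Q\subseteq[0,1]$ containing $0$ and $1$, and let $N\subseteq[0,1]$ be the single null set (a countable union) off which, for all $q\in Q$, the derivative $\partial_1 C(t,q)$ exists, lies in $[0,1]$, and is monotone in $q$. For $t\notin N$ define $h_v(t):=\inf_{q\in Q,\,q>v}\partial_1 C(t,q)$ for every $v\in[0,1]$, which is non-decreasing in $v$ and valued in $[0,1]$ by construction; for $t\in N$ set $h_v(t):=v$, which is monotone in $v$ and contributes nothing to integrals in $t$. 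Letting $q\downarrow v$ through $Q$ and invoking monotone convergence together with the right-continuity of $v\mapsto C(u,v)$ yields $\int_0^u h_v(t)\de t=\lim_{q\downarrow v}C(u,q)=C(u,v)$ for every $v$, re-establishing (i) and, at $u=1$, (ii), while (iii) now holds for every $t$ by construction.

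For the "if" direction, given a family $(h_v)$ satisfying (i)--(iii) I would define $C$ by (i) and verify the axioms directly. Groundedness $C(0,v)=0$ is immediate, and $C(u,0)=0$ follows because (ii) at $v=0$ forces $h_0=0$ almost everywhere; similarly (ii) at $v=1$ together with $h_1\le 1$ forces $h_1=1$ almost everywhere, giving the margin $C(u,1)=u$, while $C(1,v)=v$ is exactly (ii). Finally, two-increasingness is the cleanest consequence: for $u\le u'$ and $v\le v'$,
\[
C(u',v')-C(u',v)-C(u,v')+C(u,v)=\int_u^{u'}\bigl(h_{v'}(t)-h_v(t)\bigr)\de t\ge 0,
\]
where non-negativity of the integrand is pointwise by (iii). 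This is precisely where having monotonicity in $v$ for every $t$, rather than merely almost everywhere, is needed, and it closes the equivalence.
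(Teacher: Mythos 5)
Your proof is correct, and your ``if'' direction coincides with the paper's: define $C$ via (i), get two-increasingness from the pointwise monotonicity in (iii), groundedness from $\int_0^1 h_0(t)\de t=0$, and the margins from (ii) together with the fact that $h_1\le 1$ and $\int_0^1 h_1(t)\de t=1$ force $h_1=1$ a.e. Where you genuinely diverge is the ``only if'' direction. The paper disposes of it in one line by taking $h_v(t):=F_{V|U=t}(v)$ for $(U,V)\sim C$, i.e.\ by invoking the existence of a regular conditional distribution (a Markov kernel), which automatically yields a version that is a distribution function in $v$ for \emph{every} $t$ and satisfies $\int_0^u h_v(t)\de t = P(U\le u,\,V\le v)=C(u,v)$ by disintegration. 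You instead build such a version by hand: start from the a.e.-defined partial derivative $\partial_1 C(\cdot,v)$ (which exists by the Lipschitz property of the sections), collect the exceptional null sets over a countable dense $Q$, and regularize by $h_v(t)=\inf_{q\in Q,\,q>v}\partial_1 C(t,q)$, recovering (i) and (ii) via monotone/dominated convergence and the continuity of $v\mapsto C(u,v)$. Your route is more elementary and self-contained---it makes explicit the measure-theoretic content that the paper's appeal to conditional distributions hides---at the cost of length; the paper's route is shorter but rests on the disintegration theorem. One small repair in your construction: at $v=1$ the index set $\{q\in Q:\ q>v\}$ is empty, so the infimum is ill-defined; either define $h_v(t)=\inf_{q\in Q,\,q\ge v}\partial_1 C(t,q)$ (which changes nothing for $v\notin Q$ and is harmless for $v\in Q$ by monotonicity and the subsequent continuity argument), or set $h_1(t)=1$ directly, noting $\partial_1 C(t,1)\equiv 1$ since $C(u,1)=u$.
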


For bivariate copulas \(D\) and \(E,\) the lower orthant order \(D\leq_{lo} E\) is defined by \(D(u,v)\leq E(u,v)\) for all \((u,v)\in [0,1]^2.\) As a consequence of \eqref{frm_rho_integral}, Spearman's rho generally increases with the \(\leq_{lo}\)-order of copulas, while Chatterjee's xi is \(\leq_{lo}\)-increasing on the class of SI copulas as follows.

\begin{lemma}\label{lemSIcops}
    For SI copulas \(C,D\) with \(C\leq_{lo} D,\) it is \(\xi(C)\leq \xi(D)\) and \(\rho(C)\leq \rho(D).\)
\end{lemma}

For a copula \(C\in \mathcal{C},\) we refer to the uniquely determined SI (SD) copula \(C^\uparrow\) (\(C^\downarrow\)) constructed by rearrangements due to \cite[Proposition 3.17]{Ansari-Rueschendorf-2021} as the increasing (decreasing) rearranged copula of \(C\).

\begin{figure}[tb]
\centering
\includegraphics[width=0.38\textwidth, trim=0mm 10mm 0mm 10mm, clip]{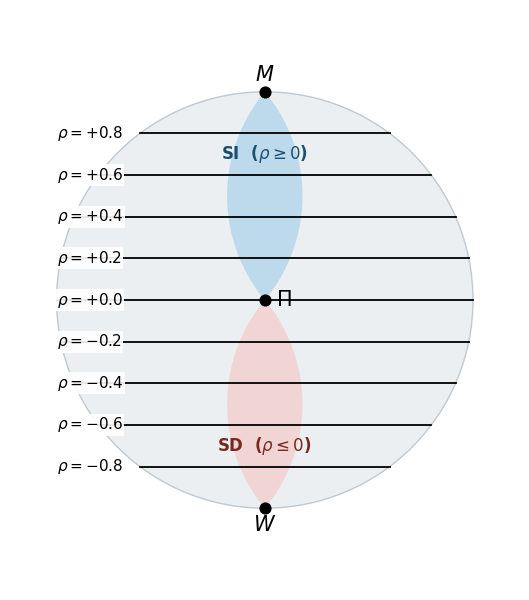}
\includegraphics[width=0.38\textwidth, trim=0mm 10mm 0mm 10mm, clip]{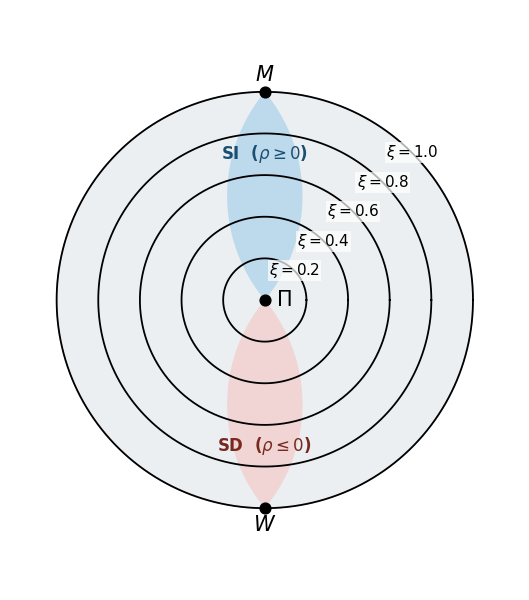}
\caption{
The class of bivariate copulas with level sets of Spearman’s $\rho$ (left) and Chatterjee’s $\xi$ (right) around the independence copula $\Pi.$
Constant values of $\rho$ appear as horizontal stripes; the extremes $\rho=\pm1$ occur only at the Fréchet copulas $M$ and $W.$  
In contrast, constant values of $\xi$ appear as concentric circles centred at $\Pi,$ expanding from $\xi=0$ (independence) to $\xi=1$ (perfect directed dependence).  
The blue and red lenses highlight stochastically increasing (SI) and decreasing (SD) copulas, respectively, which form convex subsets of the set of all copulas $\mathcal{C}.$
\label{fig:xi_and_rho}
}
\end{figure}

\begin{figure}[t!]
    \centering
    \includegraphics[width=0.9\textwidth, trim={40 100 0 20}, clip]{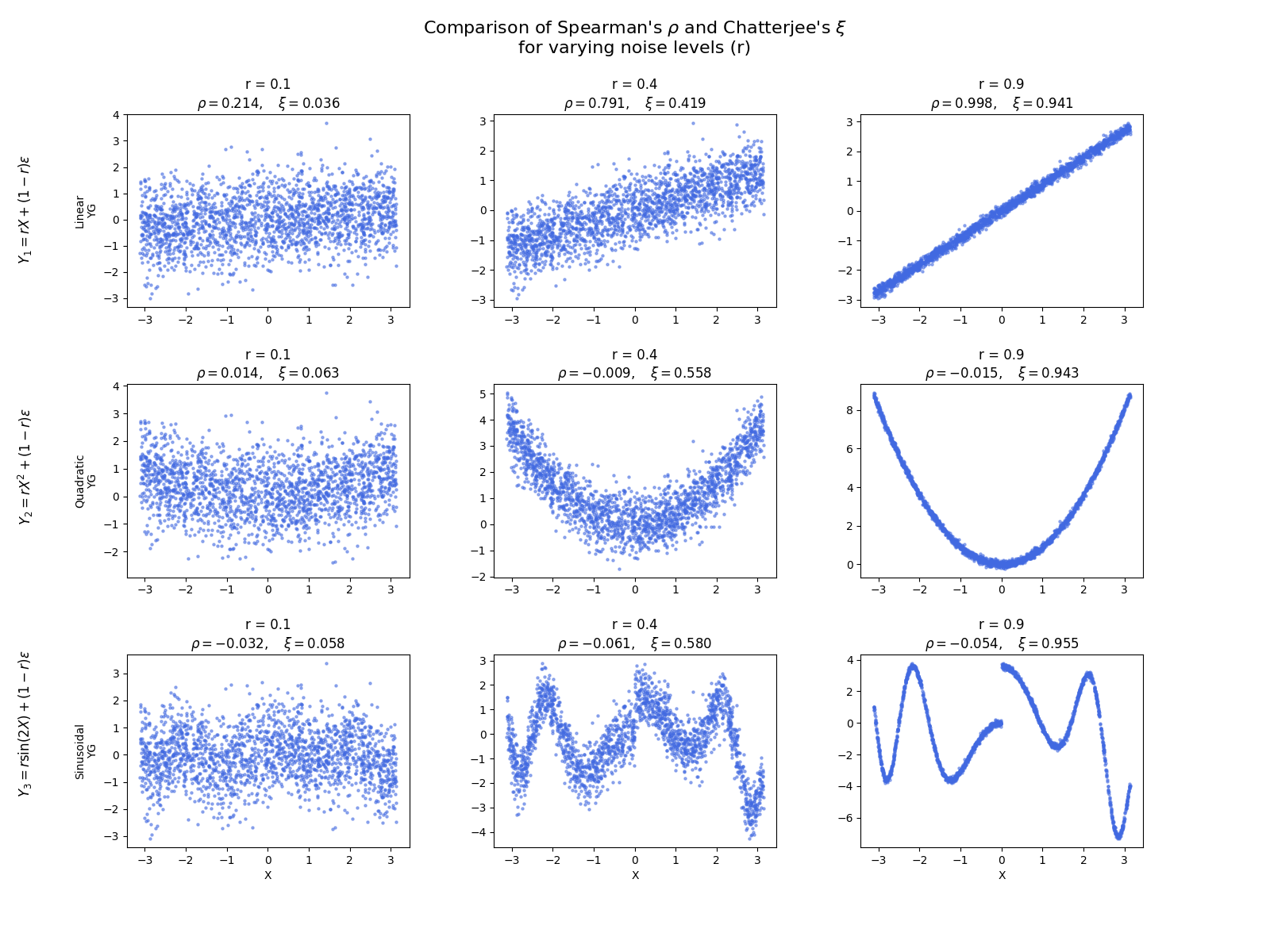}
    \caption{Comparison of Spearman's rank correlation $\rho$ and Chatterjee's rank correlation $\xi$ across different functional dependencies and noise levels. The first row corresponds to the linear model $Y_1 = r X + (1-r) \varepsilon$, the second row to the quadratic model $Y_2 = r X^2 + (1-r) \varepsilon$, and the third row to the sinusoidal model $Y_3 = r(4\sin(-X^2) + (2-X)(2+X)\1_{\{X>0\}}) + (1-r) \varepsilon$, where \(X\) is uniform on \((-\pi,\pi)\) and independent from the standard normal error \(\varepsilon\).
    The columns correspond to an increasing signal parameter $r \in \{0.1, 0.4, 0.9\}$ and decreasing noise. The figure illustrates that $\rho$ effectively detects the strength of monotonic (in particular, linear) dependence (row 1). For non-monotonic dependence, it may attain the value zero as illustrated in rows 2 and 3. In contrast, $\xi$ increases with $r$ across all models, reflecting the strength of the functional dependence regardless of monotonicity.}
    \label{fig:rho_xi_comparison}
\end{figure}
\newpage
\begin{lemma}\label{lemrearrangedcop}
    The rearranged copulas \(C^\uparrow\) and \(C^\downarrow\) of \(C\) satisfy
    \begin{enumerate}[label=(\roman*), font=\upshape]
        \item \label{lemrearrangedcop1}\(C^\downarrow (u,v)\leq C(u,v) \leq C^\uparrow(u,v)\) and \(C^\downarrow(u,v) = v - C^\uparrow(1-u,v)\),  \((u,v)\in [0,1]^2,\)
        \item \label{lemrearrangedcop2} \(\xi(C^\downarrow) = \xi(C) = \xi(C^\uparrow),~\rho(C^\downarrow) \le \rho(C) \le \rho(C^\uparrow)\), and \(\rho(C^\downarrow) = -\rho(C^\uparrow).\)
    \end{enumerate}
\end{lemma}

We also use the following properties of Spearman's and Chatterjee's rank correlation under convex combinations of copulas.

\begin{lemma}\label{lem:convexity_xi}
    Let $C_1,\,C_2$ be copulas and let $\lambda \in [0,1].$
    If
    \( 
        C \;=\; \lambda\,C_1 \;+\; (1-\lambda)\,C_2
    ,\)
    then we have
\begin{enumerate}[label=(\roman*), font=\upshape]
    \item \(\rho(C) = \lambda \rho(C_1) + (1-\lambda)\rho(C_2)\),
    \item \(\xi(C)\le  \lambda\,\xi(C_1) + (1-\lambda)\,\xi(C_2).\)
\end{enumerate}
\end{lemma}

\section{A convex optimisation problem}
\label{sec3}

The proofs of Theorem~\ref{thexirho} and Theorem~\ref{thm:rho_ge_xi} are based on a representation of \(\rho(C) - \xi(C)\) in terms of a partial derivative of \(C.\)
To this end, we consider the function $t \mapsto h_v(t):=\partial_1 C(t,v)$ for \(v\in [0,1].\)
Recall that \(h_v\) maps from \([0,1]\) to \([0,1],\) is measurable and well-defined outside a Lebesgue-null set.
Using the disintegration $C(u,v)=\int_0^u h_v(t)\de t$ and applying Fubini's theorem, we have
\begin{align}\label{reprho2}
\int_{[0,1]^2}C(u,v)\de \lambda^2(u,v)
=\int_{0}^{1}\int_{0}^{1}\Bigl(\int_{0}^{u}h_v(t)\de t\Bigr)\de u \de v
=\int_{0}^{1}\int_{0}^{1}(1-t)h_v(t)\de t \de v.
\end{align}
Hence, one can rewrite the difference of Spearman's rho and Chatterjee's xi as
\begin{align}
 \label{optprob1a}
  \hspace{-.23cm}  \rho(C)-\xi(C) &= \left[12 \int_0^1\int_0^1 (1-t) h_v(t) \de t \de v - 3\right]  -\left[6\int_0^1\int_0^1 h_v(t)^2 \de t \de v - 2\right]\\
 \label{optprob1b} &= 
  6 \int_{0}^{1}\!\underbrace{\int_{0}^{1}\!\bigl(2(1-t)h_v(t)-h_v(t)^2\bigr)\de t}
    _{=:F_v(h_v)}\de v-1 .
\end{align}
First, let us consider the inner integral in \eqref{optprob1b} as a concave functional \(F_v\) in \(h_v.\) If \(t \mapsto h_v(t)\) is decreasing for all \(v\) or, equivalently, if \(C\) is an SI copula, we can show that the expression in \eqref{optprob1b} is non-negative. More precisely, we use a maximum principle for convex sets to prove that the minimum over all SI copulas is attained at the boundary of the set.
In this case, it follows that
\begin{align}
    F_v(h_v)\geq v(1-v)
\end{align}
with equality for all \(v\) if and only if \(C\) is the product copula \(\Pi\) or the upper Fr\'{e}chet copula \(M.\) This is the content of Theorem~\ref{thm:rho_ge_xi}.

For proving Theorem~\ref{thexirho}, we determine the maximal value of Spearman's rho for a fixed value \(x\in [0,1]\) of Chatterjee's rank correlation, i.e., we solve the maximisation problem 
\begin{align}\label{opt1a}
   M_x := \max\{\rho(C) \mid C\in \mathcal{C}, ~\xi(C) = x\}.
\end{align}
To determine \(M_x,\) the idea is to solve an extended maximisation problem using convexity of a modified constraint set.
To this end, we first observe that solutions to \eqref{opt1a} are SI copulas as a consequence of Lemma \ref{lemrearrangedcop}.
Second, we know from Lemma~\ref{lemSIcops} that, for SI copulas, both \(\xi\) and \(\rho\) are increasing with the lower orthant order.
Then, using a simple rearrangement argument, one can show that the maximisation problem \eqref{opt1a} is equivalent to
\begin{align}\label{opt1}
   M_x = \max\{\rho(C) \mid C\in \mathcal{C}, ~\xi(C) \le x\}.
\end{align}

Using the representation \eqref{optprob1a} and the characterisation of copulas in Lemma~\ref{charSIcop}, we can now rewrite the extended maximisation problem \eqref{opt1} as follows.

\begin{optimisationproblem}\label{optprob}
Over all measurable functions \((t,v)\mapsto h_v(t),\)
\begin{align}
    \text{maximise} \quad &\int_0^1\int_0^1 (1-t) h_v(t) \de t \de v \nonumber\\
    \label{eqthexirho1a} \text{subject to} \quad  &0\le h_v(t) \le 1 \text{ for all } v,t\in [0,1],\\
    \label{eqthexirho1b}  \quad  &h_v(t) \le h_{v'}(t)  \text{ for all } t,v,v'\in [0,1] \text{ with } v\le v',\\
    \label{eqthexirho2} \quad  &\int_0^1 h_v(t) \de t = v \quad \text{for all } v\in [0,1],\\
    \label{eqthexirho3} \text{and} \quad & 6 \int_0^1\int_0^1 (h_v(t))^2 \de t \de v - 2 \le x. 
\end{align}
\end{optimisationproblem}
While the constraints in \eqref{eqthexirho1a} -- \eqref{eqthexirho2} ensure that the family \((h_v)_{v\in [0,1]}\) yields a copula \(C,\) the constraint in \eqref{eqthexirho3} means that \(\xi(C) \le x.\) 
The latter inequality constraint is more convenient than the equality constraint $\xi(C)=x,$ because it yields a convex optimisation problem where the objective function is linear and the set of constraints is convex.
We solve this optimisation problem in Theorem \ref{thexirhooptimisation}, where we first show that the family \(h^* = (h_v)_{v\in [0,1]}\) of clamped linear functions in \eqref{h_v_clamped} is a stationary point of the (generalised) Lagrangian in \eqref{eq:generalised_lagrangian}.
Then, we use the quadratic structure of the Lagrangian and verify by \eqref{seuffsecordcond} the sufficient second-order conditions in \cite[Thm.~3.63]{bonnans2013perturbation} to show that the stationary point \(h^*\) is the unique local optimum.
Using the fact that the objective function is linear and the constraint set is convex, it follows that \(h^*\) is the unique global optimum.
Finally, applying Lemma~\ref{charSIcop}, we obtain a unique copula that solves Optimisation Problem \ref{optprob} and thus the maximisation problem \eqref{opt1a}.

To this end, consider for $b>0$ and $(u,v)\in[0,1]^2$ the function \(C_b\) defined by
\begin{align}\label{eq:C_x}
    C_b(u,v) = \begin{cases}
        u - \frac{b}{2} (u-a_v)^2 + \frac{b}{2} (a_v\wedge 0)^2 & \text{if } a_v < u \leq s_v,\\
        \min\{u,v\} & \text{else},
    \end{cases}
\end{align}
with \(s_v\) and \(a_v\) given by
\begin{align}
\label{eq:s_v}
  s_{v} &:=
\begin{cases}
\displaystyle
\sqrt{\tfrac{2v}b},
& \text{if } v \le \frac{1}{2b}\wedge\frac{b}{2}, \\
v+\frac{1}{2b}, & \text{if } \frac{1}{2b} < v \le 1-\frac{1}{2b}, \\
\frac{v}b+\frac{1}{2},
& \text{if } \frac{b}{2} < v \le 1-\frac{b}{2}, \\
1+\tfrac1b-\sqrt{\tfrac{2(1-v)}b},
& \text{if } v > 1 - \bigl(\frac{1}{2b} \wedge \frac{b}{2}\bigr),
\end{cases}
\qquad
a_v := s_v - \tfrac 1b,
\end{align}
where \(\wedge\) denotes the minimum of two real numbers (later we will also use the symbol \(\vee\) for the maximum).
The dependence curves \(v\mapsto s_v\) and \(v\mapsto a_v\) are functions of the copula dependence parameter \(b\). They describe the boundary of the support of \(C_b\); see Equation \eqref{eq:density_cb} and Fig.~\ref{fig:C_x}.
Furthermore, for $b<0,$ define
\begin{align}\label{defcopbneg}
     C_b(u,v) := v - C_{-b}(1-u,v), \quad (u,v)\in [0,1]^2.
\end{align}
The following result shows that \((C_b)_{b\in {\mathbb{R}}\setminus\{0\}}\) is a family of bivariate copulas. We refer to Section \ref{sec4} for a detailed discussion of their properties.

\begin{lemma}\label{lemCbcop}
    The function \(C_b\) defined by \eqref{eq:C_x} for \(b>0\) and by \eqref{defcopbneg} for \(b<0\) is a bivariate copula. 
\end{lemma}

Due to the following theorem, which underlies Theorem~\ref{thexirho}, the copulas \((C_b)_{b\in\R\setminus\{0\}}\) describe the boundary of the \(\xi\)-\(\rho\)-region.
More precisely, for each \(x\in (0,1),\) we determine the copula parameter \(b=b_x\) and show that \(C_b\) is the unique solution to \eqref{opt1a}.
The proof is based on solving convex Optimisation Problem \ref{optprob} as discussed above.

\begin{theorem}[Solution to maximisation problem \eqref{opt1a}]\label{thexirhooptimisation}~\\
For \(b>0,\) the copula \(C_b\) in \eqref{eq:C_x} has the following properties:
\begin{enumerate}[label=(\roman*), font=\upshape]
    \item If \(x\in (0,1),\) then for \(b = b_x\) in \eqref{eq:b_and_M}, the copula \(C_b\) is the unique solution to Optimisation Problem \eqref{opt1a}. In particular, we have \(\xi(C_{b}) = x\) and \(\rho(C_{b}) = M_x.\)
    \item \label{thexirhooptimisation2} If \(x = 0,\) then the product copula \(\Pi\) is the unique solution to \eqref{opt1a}.
    \item \label{thexirhooptimisation3} If \(x=1,\) then the upper Fr\'{e}chet copula \(M\) is the unique solution to \eqref{opt1a}.
\end{enumerate}
\end{theorem}

Some symmetry arguments show that \(C_b\) solves for \(b<0\) the minimization problem associated with \eqref{opt1a}; see also Remark \ref{remsymm}\,\ref{remsymma}.

\section{Properties of the new diagonal band copula family}\label{sec4}

In this section, we study various properties of the new copula family \((C_b)_{b\in \R}\) defined by \eqref{eq:C_x} for \(b>0\) and by \eqref{defcopbneg} for \(b<0\).

The following result states that the first partial derivative of \(C_b\) is of clamped linear form; see Fig.~\ref{fig:C_x}. Note that
the family \((h_v)_{v\in [0,1]}\) in \eqref{h_v_clamped} solves Optimisation Problem \ref{optprob}; see the proof of Theorem \ref{thexirhooptimisation}.

\begin{proposition}[Clamped linear form of \(\partial_1 C_b\)]\label{remcopfam}~
    For \(b>0\), the derivative of \(C_b\) with respect to its first component is given by
\begin{align}\label{h_v_clamped}
   h_v(t)=\partial_1 C_b(t,v) = \operatorname{clamp}\left(b(s_v-t),\,0,\,1\right), \quad (t,v)\in [0,1]^2,
\end{align}  
where \(\operatorname{clamp}(y,\,\alpha,\,\beta) := (y \vee \alpha) \wedge \beta\).
\end{proposition}

Differentiating \(h_v\) in \eqref{h_v_clamped} with respect to \(v\) shows that \(C_b\) admits a Lebesgue density which is constant in \(u\) on the support of \(C_b\); see Fig. \ref{fig:C_x}. In the next result, we also show that \(C_b\) has convex support.

\begin{proposition}[Absolute continuity of \(C_b\)]\label{propabscont}
    For all \(b>0\), the copula \(C_b\) admits a Lebesgue density given by
    \begin{align}\label{eq:density_cb}
        c_b(u,v) = \begin{cases}
            b\,s_v', & \text{if } a_v < u < s_v, \\
            0, & \text{otherwise},
        \end{cases}
    \end{align}
    where \(s_v':= d s_v / dv\).
    Specifically, the support \(\supp(C_b) = \overline{\{(u,v) \in [0,1]^2 : c_b(u,v) > 0\}}\) is a convex set, and for \((u,v)\in \supp(C_b)\), we have
    \begin{align}\label{eqpropabscont}
        c_b(u,v) = \begin{cases}
            1/s_v, & \text{if } v \le \frac{1}{2b}\wedge\frac{b}{2}, \\
            b\vee 1, & \text{if } \frac{1}{2b}\wedge\frac{b}{2} < v \le 1 - (\frac{1}{2b}\wedge\frac{b}{2}), \\
            1/(1-a_v), & \text{if } v > 1 - (\frac{1}{2b}\wedge\frac{b}{2}).
        \end{cases}
    \end{align}
\end{proposition}

The copula family \((C_b)_{b\in \R\setminus \{0\}}\) is one-parametric with dependence parameter \(b\). The dependence curves \(v\mapsto a_v\) and \(v\mapsto s_v\) depend on \(b\) and describe the diagonal-band structure of \(C_b\) as we discuss in the following remark.

\begin{remark}[Interpreting the copula parameter]
    \leavevmode
    \begin{enumerate}[label=(\alph*)]
    \item The dependence parameter \(b\) of the copula \(C_b\) describes the negative slope of the linear copula derivative \(t\mapsto h_v(t) = \partial_1 C_b(t,v)\) for \(a_v \vee 0 < t < s_v \wedge 1;\) see \eqref{h_v_clamped} and the right panel in Fig.~\ref{fig:C_x}. 
        \item The dependence curves \(v\mapsto s_v\) and \(v\mapsto a_v,\) defined in \eqref{eq:s_v}, are fully determined by \(b.\)
The value \(s_v\) is the root of the linear part \(t\mapsto b(s_v-t)\) of \(h_v\) in \eqref{h_v_clamped}.
Similarly, \(a_v\) is the solution to \(b(s_v-t) = 1\) as an equation in \(t.\)
Both values can be interpreted geometrically in the following sense:
If we consider \(h_v\) in \eqref{h_v_clamped} as a function of \(t\) not just on $[0,1]$ but naturally extended to \({\mathbb{R}},\) and if we let
\(
  S_v := \{t\in{\mathbb{R}} \mid 0 < h_v(t) < 1\} 
,\)
then it holds that $s_v = \sup S_v$ and $a_v = \inf S_v.$
Visualising the density $c_b$ of the copula \(C_b\) in Fig.~\ref{fig:C_x}, we see that the set
\begin{align}\label{eq:diag_band}
  S:=\{(t,v) \mid \; t\in S_v \text{ and } v\in[0,1]\}
\end{align}
is a diagonal band, which, restricted to the square $[0,1]^2,$ yields the support of $c_b.$
\item For \(b>1\), the density attains the constant value \(c_b(u,v) = b\) for all \((u,v)\in (a_v,s_v)\times (\tfrac 1 {2b}, 1-\tfrac 1 {2b}),\) while, for \(0  < b<1,\) the density attains the constant value \(c_b(u,v) = 1\) for all \((u,v)\in (0,1)\times (\tfrac b 2, 1-\tfrac b2)\,;\) the respective cases are visualised by the area between the two dotted black lines drawn for $b=0.5$ and $b=5$ in Fig.~\ref{fig:C_x}.
Note that \((C_b)_{b\in \mathbb{R}\setminus \{0\}}\) is an asymmetric diagonal band copula family.
Symmetric diagonal band copulas are studied in \cite{cooke1986monte} and \cite{Kotz-2010}.
\end{enumerate}
\end{remark}

The copula family \((C_b)_{b\in \R\setminus \{0\}}\) admits the following limiting cases.

\begin{proposition}[Limiting cases]\label{proplimcase}
    We have
    \begin{enumerate}[label=(\roman*), font=\upshape]
        \item \label{proplimcase1} \(C_b \to \Pi\) for \(b\to 0\),
        \item \label{proplimcase2} \(C_b \to M\) for \(b\to \infty\),
        \item \label{proplimcase3} \(C_b\to W\) for \(b\to -\infty\),
    \end{enumerate}
    where each convergence is uniform.
\end{proposition}
 
\begin{figure}[htpb]
 \centering 
 \includegraphics[width=0.5\textwidth, trim= 0mm 3mm 00mm 0mm, clip]{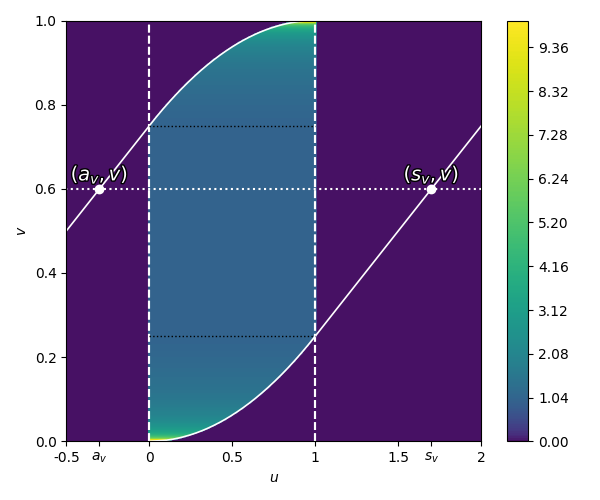}\label{fig:rho_minus_xi_maximal_copula_0.5}
 \hfill
 \includegraphics[width=0.4\textwidth, trim= 0mm 0mm 00mm 0mm, clip]{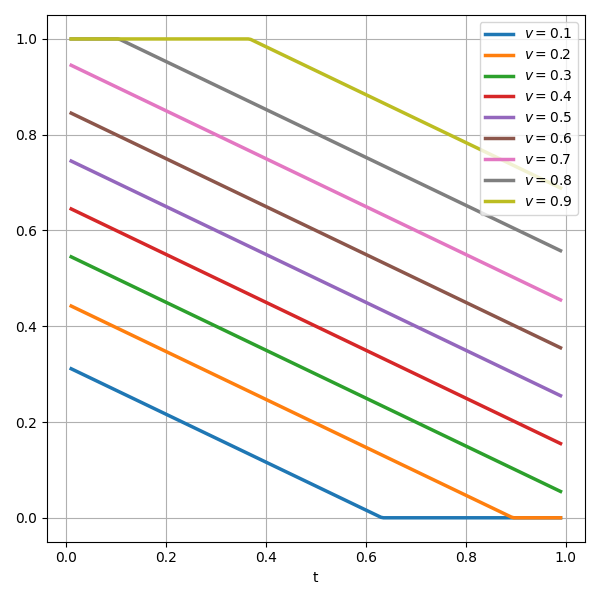}\label{fig:cond_distr_b_0.5} \\[-0.6ex]

 \includegraphics[width=0.5\textwidth, trim= 0mm 3mm 00mm 0mm, clip]{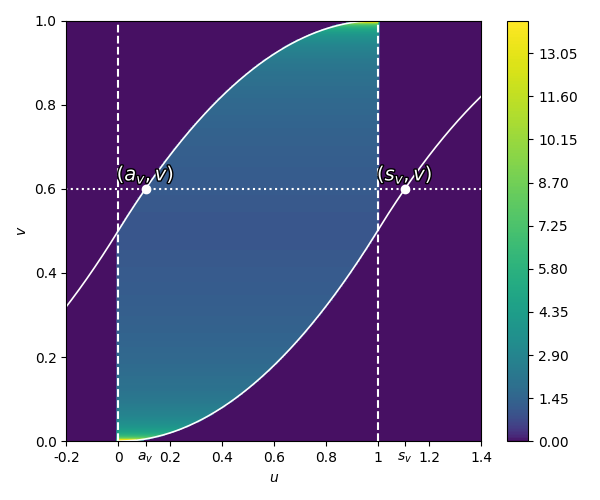}\label{fig:rho_minus_xi_maximal_copula_1}
 \hfill
 \includegraphics[width=0.4\textwidth, trim= 0mm 0mm 00mm 0mm, clip]{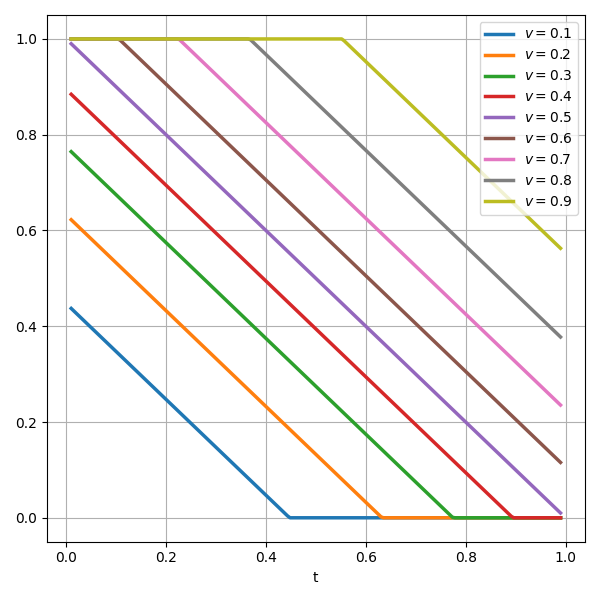}\label{fig:cond_distr_b_1} \\[-0.6ex]

 \includegraphics[width=0.5\textwidth, trim= 0mm 3mm 00mm 0mm, clip]{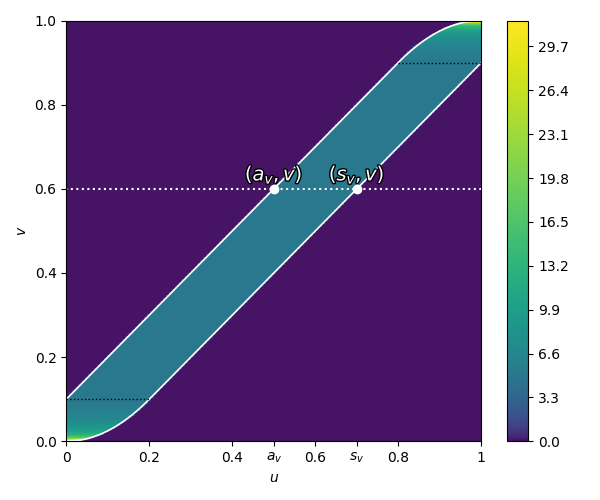}\label{fig:rho_minus_xi_maximal_copula_5}
 \hfill
 \includegraphics[width=0.4\textwidth, trim= 0mm 0mm 00mm 0mm, clip]{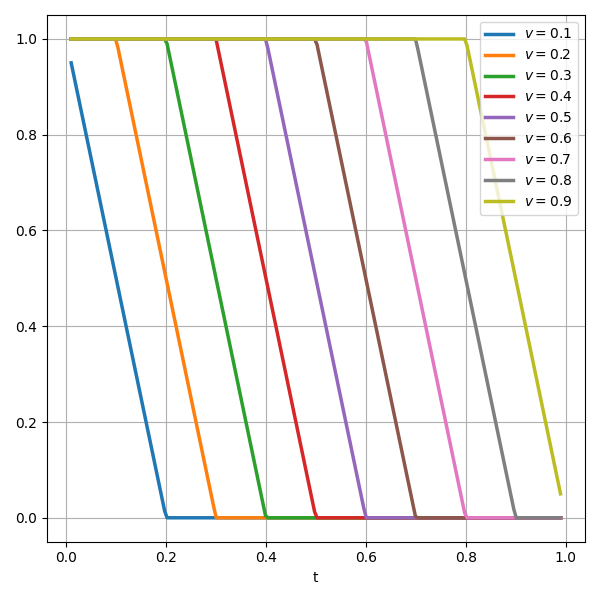}\label{fig:cond_distr_b_5}

 \caption{The density \(c_b\) (left) and the derivative \(t\mapsto h_v(t) = \partial_1 C_b(t,v)\) (right) of the copula \(C_b\) for $b=0.5$ (top), $b=1$ (middle) and $b=5$ (bottom). \(s_v\) is the (hypothetical) upper boundary of the band and \(a_v\) is the (hypothetical) lower boundary of the band, visualised in the densities for $v=0.6.$ The support of the densities on the left is just the intersection of the diagonal band \(S\) in \eqref{eq:diag_band} and the unit square $[0,1]^2$. Note that the density is zero outside the band; see \eqref{eq:density_cb}.}
 \label{fig:C_x}
\end{figure}

The next result shows that the copula family \((C_b)_{b\in \R\setminus \{0\}}\) ranges from perfect negative dependence via independence to perfect positive dependence. Therefore, recall that \(\leq_{lo}\) denotes the pointwise order of copulas. Further, we observe from \eqref{h_v_clamped} that, for \(b>0,\) the copula derivative \(h_v\) is decreasing for all \(v\in [0,1]\). Consequently, \(C_b\) is an SI copula for \(b>0\). As we show, \(C_b\) even satisfies the stronger \(\mathrm{MTP}_2\)-property. This positive dependence concept is defined for multivariate distributions that admit a log-supermodular Lebesgue density \(f\), i.e. the density satifies \(f(x) f(y) \leq f(x\wedge y) f(x\vee y)\) for all \(x,y\). Here \(\wedge\) and \(\vee\) denote the componentwise minimum and maximum, respectively.

\begin{proposition}[Dependence properties]\label{propdepprop}
We have
\begin{enumerate}[label=(\roman*), font=\upshape]
    \item \label{propdepprop1}  \(C_b\leq_{lo} C_{b'}\) for \(b, b'\in \R\setminus\{0\}\) with \(b \leq b'\).
    \item \label{propdepprop2} Positive dependence: For \(b>0\), \(C_b\) is \(\mathrm{MTP}_2\) and thus SI.
\end{enumerate}
\end{proposition}

In the following remark, we discuss several symmetry properties of \(C_b\).

\begin{remark}[Symmetry properties]\label{remsymm}
\leavevmode
    \begin{enumerate}[label=(\alph*)]
        \item \label{remsymma} For \(b<0,\) \(C_b\) is the decreasing rearranged copula of $C_{-b},$ i.e., \(C_b = C_{-b}^\downarrow;\) see Lemma \ref{lemrearrangedcop} \ref{lemrearrangedcop1}. By symmetry, \(C_b\) exhibits analogous properties for negative values of \(b\) as for positive values, as described in Theorem~\ref{thexirhooptimisation} and Proposition \ref{propdepprop}. For example, \(\xi(C_b) = \xi(C_{-b})\) and \(\rho(C_b) = - \rho(C_{-b}).\) In particular, \(C_b\) solves for \(b < 0\) the minimisation problem associated with \eqref{opt1a}.
        \item The copula \(C_b\) satisfies several symmetry and invariance properties. First, note that the dependence curves \(a_v\) and \(s_v\) are related by \(a_{1-v} = 1-s_v\), which follows from \(s_v + s_{1-v} = 1 + 1/b\) for all \(v\in [0,1]\) and \(b\ne 0\). This implies that the copula \(C_b\) is radially symmetric about \((1/2,1/2)\), i.e., the copula density satisfies \(c_b(u,v) = c_b(1-u,1-v)\) for all \((u,v)\in [0,1]^2\); see Fig.~\ref{fig:C_x}. As a consequence of radial symmetry, we obtain that, for \(b<0\), the copula in \eqref{defcopbneg} can also be written as \(C_b(u,v) = u - C_{-b}(u,1-v)\). Lastly note that \(C_b\) is positive (negative) measure-inducing in the sense of \cite{Fuchs-Tschimpke-2024} for \(b>0\) (\(b<0\)). This is also a consequence of radial symmetry.
        \item For $b<0$, in analogy to Proposition \ref{remcopfam}, radial symmetry yields
            \[
                h_v(t)=\partial_1 C_b(t,v) = \operatorname{clamp}\left(b(s^*_v-t),\,0,\,1\right),
            \]
            where $s^*_v := 1 - s_v$ with \(-b\) inserted to $s_v$ in \eqref{eq:s_v}.
            Likewise, for \(a_v^* = 1- a_v\), we obtain the density 
            \[
            c_b(u,v) = \begin{cases}
                b\,(1-s_v^*)', & \text{if } s_v^* < u < a_v^*,\\
                0, & \text{otherwise}.
            \end{cases}
            \]
    \end{enumerate}
\end{remark}

As we show in the next result, for the copula family \((C_b)_{b\in \R\setminus 0}\), there are simple formulas available for Chatterjee's rank correlation as well as for Spearman's and Kendall's rank correlation.
Closed-form expressions of these three rank correlations are also available for some other copula families such as for Gaussian copulas. In general, however, closed-form expressions are rare; see \cite[Table 6]{Ansari-Rockel-2023} for an overview.
We define $\mathrm{sign}(b) := 1$ for $b > 0$ and $\mathrm{sign}(b) := -1$ for $b < 0$.

\begin{proposition}[Rank correlations]\label{propcfe}
For $b \in \mathbb{R} \setminus \{0\}$, the copula $C_b$ admits closed-form expressions for
\begin{enumerate}[label=(\roman*), font=\upshape]
    \item Chatterjee's rank correlation:
    \[
        \xi(C_b) =
        \begin{cases}
            \dfrac{b^2}{10}(5-2|b|), & |b| \le 1, \\[2.5ex]
            1 - \dfrac{1}{|b|} + \dfrac{3}{10b^2}, & |b| \ge 1,
        \end{cases}
    \]

    \item Spearman's rank correlation:
    \[
        \rho(C_b) = \mathrm{sign}(b)
        \begin{cases}
            |b| - \dfrac{3b^2}{10}, & |b| \le 1, \\[2.5ex]
            1 - \dfrac{1}{2b^2} + \dfrac{1}{5|b|^3}, & |b| \ge 1,
        \end{cases}
    \]
    
    \item Kendall's rank correlation:
    \[
        \tau(C_b) = \mathrm{sign}(b)
        \begin{cases}
            \dfrac{2|b|}{3} - \dfrac{b^2}{6}, & |b| \le 1, \\[2.5ex]
            1 - \dfrac{2}{3|b|} + \dfrac{1}{6b^2}, & |b| \ge 1.
        \end{cases}
    \]
\end{enumerate}
\end{proposition}

Recall Corollary~\ref{cor:max-rho-minus-xi}, where we showed that \(\xi(C_1)=0.3\) and \(\rho(C_1)=0.7\).
Interestingly, Kendall’s tau of the copula \(C_1\) also attains a one-decimal value, namely \(\tau(C_1)=0.5\).
The copula \(C_1\) is visualized in the middle row of Fig.~\ref{fig:C_x}.

\section{Proof of Theorem~\ref{thexirho}}\label{sec5}

The proof of our first main result, Theorem~\ref{thexirho}, is based on several lemmas and propositions as well as on Theorem~\ref{thexirhooptimisation} whose proofs we give first in the following subsections.

\subsection{Proofs of Lemmas \ref{charSIcop}--\ref{lemCbcop}}

\begin{proof}[Proof of Lemma~\ref{charSIcop}.]
    '\(\Longrightarrow\)': For \((U,V)\sim C,\) choose \(h_v\) as the conditional distribution function \(t\mapsto F_{V|U=t}(v).\) Then \(h_v\) satisfies properties \ref{charSIcop1} -- \ref{charSIcop2}.\\
    '\(\Longleftarrow\)': It is straightforward to show that \((u,v)\mapsto C(u,v) = \int_0^u h_v(t) \de t\) is grounded, \(2\)-increasing, and has uniform marginals. 
\end{proof}

\begin{proof}[Proof of Lemma \ref{lemSIcops}.]
As a consequence of \cite[Lemmas 2.4 and 2.10]{Ansari-Rockel-2023}, \(\xi\) is increasing with respect to the \(\leq_{\partial_1 S}\)-ordering, which is for SI copulas by \cite[Lemmas 2.6 (ii)]{Ansari-Rockel-2023} equivalent with the \(\leq_{lo}\)-ordering. 
\end{proof}

\begin{proof}[Proof of Lemma \ref{lemrearrangedcop}]
    Statement \ref{lemrearrangedcop1} is given in \cite[Proposition 3.17]{Ansari-Rueschendorf-2021}.
    The invariance property for \(\xi\) in \ref{lemrearrangedcop2} follows from \cite[Lemma 2.10]{Ansari-Rockel-2023} using that the rearranged copulas are constructed through the Schur order in \cite[Definition 2.3]{Ansari-Rockel-2023}.
\end{proof}

\begin{proof}[Proof of Lemma \ref{lem:convexity_xi}]
    (i) This is a direct consequence of the representation of \(\rho(C)\) in \eqref{frm_rho_integral}.

    (ii) Recall that \(\xi(C) = 6 \int_{[0,1]^2} (\partial_1 C(u,v))^2 \de u \de v - 2\). Since differentiation is a linear operator, we have \(\partial_1 C = \lambda \partial_1 C_1 + (1-\lambda) \partial_1 C_2\) for $\lambda\in[0,1]$.
    The function \(f(x) = x^2\) is convex. Therefore, by Jensen's inequality,
    \[
        (\partial_1 C)^2 = (\lambda \partial_1 C_1 + (1-\lambda) \partial_1 C_2)^2 \le \lambda (\partial_1 C_1)^2 + (1-\lambda) (\partial_1 C_2)^2.
    \]
    Integrating over \([0,1]^2\) yields
    \begin{align*}
        \xi(C)
        &\le \lambda \left(6 \int_0^1\int_0^1 (\partial_1 C_1)^2 \de u \de v\right) + (1-\lambda) \left(6 \int_0^1\int_0^1 (\partial_1 C_2)^2 \de u \de v\right) - 2 
        = \lambda \xi(C_1) + (1-\lambda) \xi(C_2),
    \end{align*}
    which proves the claim.
\end{proof}

\begin{proof}[Proof of Lemma \ref{lemCbcop}]
We distinguish the cases \(b>0\) and \(b<0\).

\emph{Case \(b>0\):}
Let \(h_v(t) := \partial_1 C_b(t,v)\). 
By \eqref{eq:C_x}, one has \(h_v(t) = b(s_v - t)\) for all \(t\in (a_v, s_v)\).
Furthermore, it is straightforward to check that $a_v \le v \le s_v$ for all \(v\in [0,1]\), so that \eqref{eq:C_x} yields $h_v(t) = 1$ for $t < a_v$ and $h_v(t) = 0$ for $t > s_v$. In total, we have
\begin{equation}\label{defhvhv}
    h_v(t) = \operatorname{clamp}\bigl(b(s_v-t),\, 0,\, 1\bigr),
\end{equation}
for all \(t\in (0,1)\setminus\{a_v,s_v\}\), where \(\operatorname{clamp}(y,\,\alpha,\,\beta) = (y \vee \alpha) \wedge \beta\).
To show that \(C_b\) is a copula, we verify the conditions of Lemma \ref{charSIcop}.
First, note that \(h_v\) maps to \([0,1]\) for all \(t,v\in [0,1]\).
The remaining conditions are easily verified as follows:
\begin{enumerate}[label=(\roman*), font=\upshape]
    \item By the fundamental theorem of calculus and \(C_b(0,v)=0\), we have \(C_b(u,v) = \int_0^u h_v(t) \de t\).
    \item Differentiating the expressions in \eqref{eq:s_v} shows that \(v \mapsto s_v\) is strictly increasing. Since \(b>0\), the function \(s \mapsto \operatorname{clamp}(b(s-t), 0, 1)\) is increasing. Thus, \(h_v(t)\) is increasing in \(v\).
    \item We verify that \(\int_0^1 h_v(t) \de t = v\) by calculating the integral explicitly for the four cases in \eqref{eq:s_v}.
    \begin{enumerate}[label=\arabic*., font=\upshape]
        \item If \(s_v \le 1\) and \(a_v \le 0\), the support of \(h_v\) on \([0,1]\) is \([0, s_v]\). The condition \(a_v \le 0\) is equivalent to \(s_v \le 1/b\), so it must be \(s_v \le 1 \wedge 1/b\).
        Using that the function \(v \mapsto s_v\) is strictly increasing, we evaluate \(s_v\) at the upper bound of the first case, \(v^* = \frac{1}{2b} \wedge \frac{b}{2}\), and obtain
        \[
            s_{v^*} = \sqrt{\frac{2v^*}{b}} = \sqrt{\frac{2}{b} \left(\frac{1}{2b} \wedge \frac{b}{2}\right)} = \sqrt{\frac{1}{b^2} \wedge 1} = \frac{1}{b} \wedge 1.
        \]
        Thus, the condition \(s_v \le 1 \wedge 1/b\) holds if and only if \(v \le \frac{1}{2b} \wedge \frac{b}{2}\). This confirms we are in the first case of \eqref{eq:s_v}, where \(s_v = \sqrt{2v/b}\). Hence,
        \[
            \int_0^1 h_v(t)\de t = \int_0^{s_v} b(s_v-t) \de t = \frac{b}{2}s_v^2 = \frac{b}{2}\left(\frac{2v}{b}\right) = v.
        \]
        \item If \(s_v \le 1\) and \( a_v > 0\), the geometric constraints imply \(1/b < s_v \le 1\). This interval is non-empty only if \(b > 1\).
        We check the boundaries of the second case in \eqref{eq:s_v}: at the lower bound \(v = \frac{1}{2b}\), we have \(s_v = 1/b\), and at the upper bound \(v = 1 - \frac{1}{2b}\), we have \(s_v = 1 - \frac{1}{2b} + \frac{1}{2b} = 1\).
        Again, since \(s_v\) is strictly increasing, \(1/b < s_v \le 1\) holds if and only if \(\frac{1}{2b} < v \le 1 - \frac{1}{2b}\).
        In this region, the full linear drop of \(h_v\) occurs within \([0,1]\), so the integral is the sum of a rectangle and a triangle:
        \[
            \int_0^1 h_v(t)\de t = a_v \cdot 1 + \frac{1}{2}(s_v-a_v)\cdot 1 = \left(s_v - \frac{1}{b}\right) + \frac{1}{2b}= v.
        \]
        \item If \(a_v \le 0\) and \(s_v > 1\), then \(1 < s_v \le 1/b\). This interval is non-empty only if \(b < 1\).
        We check the boundaries of the third case in \eqref{eq:s_v}: at \(v = \frac{b}{2}\), we have \(s_v = 1\), and at \(v = 1 - \frac{b}{2}\), we have \(s_v = \frac{1-b/2}{b} + \frac{1}{2} = \frac{1}{b}\).
        Since \(s_v\) is strictly increasing, \(1 < s_v \le 1/b\) holds if and only if \(\frac{b}{2} < v \le 1 - \frac{b}{2}\).
        In this region, \(h_v\) is linear everywhere on \([0,1]\), so one obtains
        \[
            \int_0^1 h_v(t)\de t = \int_0^1 b(s_v-t) \de t = b s_v - \frac{b}{2} = v.
        \]
        \item If \(s_v > 1\) and \(a_v > 0\), the constraints correspond to \(v > 1 - (\frac{1}{2b} \wedge \frac{b}{2})\).
        Here, \(h_v(t)\) decreases linearly from \(1\) at \(t=a_v\) to \(b(s_v-1)\) at \(t=1\). The integral is the area of the unit square minus the triangle above the graph on \([a_v, 1]\), hence it holds
        \[
            \int_0^1 h_v(t)\de t = 1 - \int_{a_v}^1 \bigl(1-b(s_v-t)\bigr) \de t = 1 - \frac{b}{2}(1-a_v)^2 = v,
        \]
        using for the last equality that \(s_v = 1 + \frac{1}{b} - \sqrt{\frac{2(1-v)}{b}}\) from the last case in \eqref{eq:s_v} and \(a_v = s_v - 1/b\).
    \end{enumerate}
\end{enumerate}
All conditions of Lemma \ref{charSIcop} are thus satisfied.

\emph{Case \(b<0\):}
Let \(\beta = -b > 0\). By definition \eqref{defcopbneg}, \(C_b(u,v) = v - C_{\beta}(1-u, v)\).
Let \((U,V)\) be a random vector with distribution function \(C_\beta\), which is a copula by the first part. Then \(C_b\) is the joint distribution function of \((1-U, V)\) (see, e.g., \cite[Corollary~2.4.4 (c)]{Durante-2016}) and thus a copula.
\end{proof}

\subsection{Proofs of Propositions \ref{remcopfam}--\ref{propcfe}}

\begin{proof}[Proof of Proposition \ref{remcopfam}]
    The statement follows from the differentiation performed in the proof of Lemma \ref{lemCbcop}.
\end{proof}

\begin{proof}[Proof of Proposition \ref{propabscont}]
    We first consider the case \(b>0\).
    Recall from Proposition \ref{remcopfam} that \(C_b\) is defined via its partial derivative \(h_v(t) = \partial_1 C_b(t,v) = \operatorname{clamp}(b(s_v-t), 0, 1)\).
    The density is obtained by differentiation with respect to \(v\), i.e., \(c_b(u,v) = \partial_v h_v(u)\).
    Since \(h_v(u)\) is constant (taking values \(0\) or \(1\)) whenever \(u \notin (a_v, s_v)\), the derivative vanishes outside this interval.
    Inside the interval \((a_v, s_v)\), we have \(h_v(u) = b(s_v - u)\), and thus 
    \begin{align}\label{eqden}
        c_b(u,v) = \partial_v h_v(u) = b s_v' \quad \text{for } u\in (a_v,s_v)
    \end{align}
    Differentiating \(s_v\) in \eqref{eq:s_v} yields the explicit piecewise values:
    \begin{enumerate}[label=(\roman*), font=\upshape]
        \item For \(v \le \frac{1}{2b}\wedge\frac{b}{2}\), we have \(s_v = \sqrt{2v/b}\). Then \(s_v' = \frac{d s_v}{d v} = \frac{1}{\sqrt{2vb}} = \frac{1}{b s_v}\). Thus, \(c_b(u,v) = b s_v' = s_v^{-1}\).
        \item For \(\frac{1}{2b}\wedge\frac{b}{2} < v \le 1 - (\frac{1}{2b}\wedge\frac{b}{2})\), \(s_v\) is linear with slope \(1\) (if \(b \ge 1\)) or \(1/b\) (if \(0 < b < 1\)).
        In the first case, \(c_b(u,v) = b \cdot 1 = b\). In the second, \(c_b(u,v) = b \cdot (1/b) = 1\). Thus \(c_b(u,v) = b\vee 1\).
        \item For \(v > 1 - (\frac{1}{2b}\wedge\frac{b}{2})\), we have \(s_v = 1 + \frac{1}{b} - \sqrt{\frac{2(1-v)}{b}}\).
        Using that \(a_v = s_v - 1/b\), we observe \(1-a_v = \sqrt{\frac{2(1-v)}{b}}\).
        The derivative is \(s_v' = \frac{1}{b\sqrt{2(1-v)/b}} = \frac{1}{b(1-a_v)}\). Thus, \(c_b(u,v) = b s_v' = (1-a_v)^{-1}\).
    \end{enumerate}

    To prove the convexity of the support of \(C_b\), we observe that the support is given by $S = S_1 \cap S_2$, where
    \[
        S_1 := \{(u,v) \in [0,1]^2 : u \le s_v\},
        \quad S_2 := \{(u,v) \in [0,1]^2 : u \ge a_v\}.
    \]
    Regarding $S_1$, the condition \(u \le s_v\) is equivalent to \(u \le \min(1, s_v)\).
    Define \(f(v) := \min(1, s_v)\). We show that \(f\) is a concave function on \([0,1]\).
    From the explicit form in \eqref{eq:s_v}, \(s_v\) is continuous and strictly increasing, and in the proof of Lemma \ref{lemCbcop}, we already observed that only the cases 1 and 2 meet the condition \(s_v \le 1\); but since square roots and linear functions have non-positive second derivatives, \(f\) is concave in these regions.
    To verify concavity globally, we check the transition points. For instance, if \(b \ge 1\), the transition from case 1 to case 2 occurs at \(v^* = \frac{1}{2b}\). The derivative is continuous at this point:
    \[
        \lim_{v \uparrow v^*} s_v' = \lim_{v \uparrow \frac{1}{2b}} \frac{1}{b\sqrt{2v/b}} = \frac{1}{b(1/b)} = 1,
    \]
    which matches the constant slope \(s_v'=1\) of case 2.
    Similarly, the transition from case 2 to case 4 at \(v^{**} = 1 - \frac{1}{2b}\) also has a continuous derivative:
    \[
        \lim_{v \downarrow v^{**}} s_v'
        = \lim_{v \downarrow 1 - \frac{1}{2b}} \frac{1}{b\sqrt{2(1-v)/b}}
        = \frac{1}{b\sqrt{1/b^2}}
        = 1,
    \]
    which matches the constant slope \(s_v'=1\) of case 2.
    Therefore, \(f\) is concave on \([0,1]\).
    Since the hypograph of a concave function is a convex set, \(S_1\) is convex.
    The convexity of \(S_2\) follows analogously, and the support \(S = S_1 \cap S_2\) is convex as the intersection of two convex sets.
\end{proof}

\begin{proof}[Proof of Proposition \ref{proplimcase}]
\ref{proplimcase1}: For \((u,v)\in (0,1)^2\), the density in \eqref{eqpropabscont} converges to \(1\) as \(b\downarrow 0\), and similarly for \(b\uparrow 0\). \\
    \ref{proplimcase2}: For \(b\to +\infty\), we have \(s_v\to v\) and \(a_v \to v\). Hence, \(C_b(u,v)\) in \eqref{eq:C_x} converges to \(u\) if \(0\leq u < v\), and to \(v\) if \(v < u \leq 1\). By Lipschitz-continuity of copulas, this also implies \(C_b(u,v) \to v\) for \(u=v\). Hence, \(C_b \to M\) as \(b\to \infty\).\\
    \ref{proplimcase3}: The statement follows from \ref{proplimcase2} by symmetry due to \eqref{defcopbneg}.
\end{proof}

For the proof of Proposition \ref{propdepprop}, we make use of the following lemma on two functions having equal integrals and only one intersection point. We omit the straightforward proof.

\begin{lemma}\label{lemsignchange}
    Let \(f,g\colon (0,1)\to \R\) be decreasing, Lebesgue-integrable functions with \(\int_0^1 f(t) \de t = \int_0^1 g(t) \de t.\) Assume that there exists \(t_*\in (0,1)\) such that \(\{f < g\} \subseteq [0,t_*] \subseteq \{f\leq g\}\,.\) Then we have \(\int_0^x g(t) \de t \geq \int_0^x f(t) \de t\) for all \(x\in (0,1)\,.\)
\end{lemma}

\begin{proof}[Proof of Proposition \ref{propdepprop}]

We first prove statement \ref{propdepprop2}. Therefore, recall from Proposition \ref{propabscont} that the density of $C_b$ is given by
    \[
        c_b(u,v) \;=\; b s_v' \ind_S(u,v),
    \]
    where $S = \{(u,v) \in [0,1]^2 : a_v \leq  u \leq s_v\}$ is the support of \(C_b\).
    From the explicit expressions in \eqref{eq:s_v}, the functions $v \mapsto s_v$ and $v \mapsto a_v$ are non-decreasing on $[0,1].$
    Consequently, $S$ forms a sublattice of $[0,1]^2;$ that is, for any $(u,v), (u',v') \in S,$ it holds that $(u\wedge u', v\wedge v') \in S$ and $(u\vee u', v\vee v') \in S.$
    Since the indicator function of a sublattice is log-supermodular (i.e., $\mathrm{MTP}_2$), and the factor $b s_v'$ is non-negative and depends only on $v$ (and is thus log-supermodular on the product space), their product $c_b$ is $\mathrm{MTP}_2.$
    This implies that $C_b$ is $\mathrm{MTP}_2$ and in particular SI (cf.~\cite[page 146]{Mueller-Stoyan-2002}).

For the proof of statement \ref{propdepprop1}, consider first the case $0 < b < b'.$
    For $h_v(t; b) := \partial_1 C_b(t,v),$ we know by Proposition \ref{remcopfam} that
    \[
        h_v(t; b) = \operatorname{clamp}\bigl(b(s_v(b)-t),\, 0,\, 1\bigr).
    \]
    Since $b' > b,$ the slope of the linear segment of $t \mapsto h_v(t; b')$ is steeper than that of $t \mapsto h_v(t; b).$
    Both functions are decreasing in $t,$ take values in $[0,1],$ and satisfy the mass constraint $\int_0^1 h_v(t; b) \de t = v = \int_0^1 h_v(t; b') \de t.$
    Due to the linearity and the steeper slope of $h_v(\cdot; b'),$ the functions intersect at most once in the open interval where they are non-constant, such that $h_v(t; b') \ge h_v(t; b)$ for small $t$ and $h_v(t; b') \le h_v(t; b)$ for large $t.$
    Specifically, there exists a $t_* \in (0,1)$ such that $\{h_v(\cdot; b) < h_v(\cdot; b')\} \subseteq [0, t_*] \subseteq \{h_v(\cdot; b) \le h_v(\cdot; b')\}.$
    Applying Lemma \ref{lemsignchange}, we obtain
    \[
        \int_0^u h_v(t; b') \de t \;\ge\; \int_0^u h_v(t; b) \de t \quad \text{for all } u \in [0,1].
    \]
    Recalling that $C_b(u,v) = \int_0^u h_v(t; b) \de t,$ this yields $C_{b'}(u,v) \ge C_b(u,v),$ i.e., $C_b \le_{lo} C_{b'}.$

    Next, consider the case $b < b' < 0.$ Let $\beta = -b$ and $\beta' = -b',$ so that $\beta > \beta' > 0.$
    From the previous step, we know $C_{\beta'} \le_{lo} C_{\beta}.$
    Using the symmetry relation $C_b(u,v) = v - C_{-b}(1-u, v)$ from \eqref{defcopbneg}, we deduce
    \begin{align*}
        C_{b'}(u,v) - C_b(u,v)
        &= \bigl(v - C_{\beta'}(1-u, v)\bigr) - \bigl(v - C_{\beta}(1-u, v)\bigr) \\
        &= C_{\beta}(1-u, v) - C_{\beta'}(1-u, v) \;\ge\; 0.
    \end{align*}
    Thus, $C_b \le_{lo} C_{b'}$ holds for all negative parameters.
    Finally, note that $C_b$ is SI for $b>0$ by the above, so in particular $\Pi\leq_{lo} C_b$.
    By \eqref{defcopbneg}, it follows that $C_{-b} \leq_{lo} \Pi \leq_{lo} C_{b}$ for all $b>0$.
    Therefore, the ordering extends across zero, establishing $C_b \le_{lo} C_{b'}$ for all $b,b'\in \R$ with \(b \leq b'\).
\end{proof}

\begin{proof}[Proof of Proposition \ref{propcfe}.]
We provide the proof for the case \(b>0\). The results for \(b<0\) follow from Lemma \ref{lemrearrangedcop}, which implies \(\xi(C_{-b}) = \xi(C_b)\), \(\rho(C_{-b}) = -\rho(C_b)\), and, similarly, \(\tau(C_{-b}) = -\tau(C_b)\), noting that \(C_b\) is SI (by Proposition \ref{propdepprop}) and thus coincides with its increasing rearranged copula \(C_b^\uparrow\).
Throughout, let \(h_v(t) := \partial_1 C_b(t,v)\).

\paragraph{1. Chatterjee's xi}
Using the formula \(\xi(C) = 6 \int_0^1 \int_0^1 h_v(t)^2 \de t \de v - 2\), we define \(I(b) := \int_0^1 \int_0^1 h_v(t)^2 \de t \de v\).

\begin{enumerate}[label=(\roman*), font=\upshape]
    \item \emph{Case \(0 < b \le 1\):} We split the integral over \(v\) at \(v_1 = b/2\) and \(v_2 = 1 - b/2\).
    \begin{itemize}
        \item For \(v \in [0, b/2]\), \(h_v(t) = b(\sqrt{2v/b}-t)_+\). The inner integral is \(\frac{2}{3}\sqrt{2b}v^{3/2}\).
        \item For \(v \in (b/2, 1-b/2]\), \(h_v(t) = b(s_v-t)\) with support on \((0,1)\) (since \(a_v < 0\) and \(s_v > 1\)). The inner integral is \(v^2 + b^2/12\).
        \item For \(v \in (1-b/2, 1]\), boundary effects at \(t=0\) vanish but \(t=1\) becomes relevant. The inner integral is \(2v-1 + \frac{2}{3}\sqrt{2b}(1-v)^{3/2}\).
    \end{itemize}
    Summing the integrals over \(v\) yields \(I(b) = \frac{1}{3} + \frac{b^2}{12} - \frac{b^3}{30}\), leading to \(\xi(C_b) = \frac{b^2}{2}(1 - \frac{2b}{5})\).

    \item \emph{Case \(b \ge 1\):} The split points are \(v_1 = 1/(2b)\) and \(v_2 = 1 - 1/(2b)\).
    \begin{itemize}
        \item For \(v \in [0, 1/(2b)]\), the inner integral is \(\frac{2}{3}\sqrt{2b}v^{3/2}\).
        \item For \(v \in (1/(2b), 1-1/(2b)]\), the band is fully contained in \((0,1)\). The inner integral is \(\int_{s_v-1/b}^{s_v} b^2(s_v-t)^2 \de t = \frac{1}{3b}\). However, accounting for the integration constants correctly yields \(\int_0^1 h_v^2 = v - \frac{1}{6b}\).
        \item For \(v \in (1-1/(2b), 1]\), the inner integral mirrors the first case: \(2v-1 + \frac{2}{3}\sqrt{2b}(1-v)^{3/2}\).
    \end{itemize}
    Summing these yields \(I(b) = \frac{1}{2} - \frac{1}{6b} + \frac{1}{20b^2}\), leading to \(\xi(C_b) = 1 - \frac{1}{b} + \frac{3}{10b^2}\).
\end{enumerate}

\paragraph{2. Spearman's rho}
We compute \(\rho(C_b) = 12 K(b) - 3\) with \(K(b) = \int_0^1 \int_0^1 (1-t)h_v(t) \de t \de v\).
The integration proceeds analogously to the calculation for \(\xi\) by splitting the domain of \(v\) based on whether the support of \(h_v\) is contained in \([0,1]\) or clipped by the boundaries. Let \(I_v := \int_0^1 (1-t)h_v(t) \de t\) denote the inner integral.

\begin{enumerate}[label=(\roman*), font=\upshape]
    \item \emph{Case \(0 < b \le 1\):} We split the integral over \(v\) at \(v_1 = b/2\) and \(v_2 = 1 - b/2\).
    \begin{itemize}
        \item For \(v \in [0, b/2]\), boundary effects at \(t=1\) vanish. Thus,
        \(h_v(t) = b(\sqrt{2v/b}-t)_+\) and the inner integral evaluates to \(I_v = v - \frac{\sqrt{2}}{3\sqrt{b}}v^{3/2}\).
        \item For \(v \in (b/2, 1-b/2]\), \(h_v(t) = b(s_v-t)\) on \((0,1)\). The inner integral is \(I_v = \frac{v}{2} + \frac{b}{12}\).
        \item For \(v \in (1-b/2, 1]\), boundary effects at \(t=0\) vanish but \(t=1\) becomes relevant. The inner integral evaluates to \(I_v = \frac{1}{2} - \frac{\sqrt{2}}{3\sqrt{b}}(1-v)^{3/2}\).
    \end{itemize}
    Summing the integrals of \(I_v\) over \(v\) yields \(K(b) = \frac{1}{4} + \frac{b}{12} - \frac{b^2}{40}\), so \(\rho(C_b) = b - \frac{3b^2}{10}\).

    \item \emph{Case \(b \ge 1\):} The split points are \(v_1 = 1/(2b)\) and \(v_2 = 1 - 1/(2b)\).
    \begin{itemize}
        \item For \(v \in [0, 1/(2b)]\), the inner integral mirrors the first subcase of the previous case:
        \(I_v = v - \frac{\sqrt{2}}{3\sqrt{b}}v^{3/2}\).
        \item For \(v \in (1/(2b), 1-1/(2b)]\), the band is fully contained in \([0,1]\). The inner integral evaluates to \(I_v = v - \frac{v^2}{2} - \frac{1}{24b^2}\).
        \item For \(v \in (1-1/(2b), 1]\), the inner integral mirrors the third subcase of the previous case:
        \(I_v = \frac{1}{2} - \frac{\sqrt{2}}{3\sqrt{b}}(1-v)^{3/2}\).
    \end{itemize}
    Summing these integrals yields \(K(b) = \frac{1}{3} - \frac{1}{24b^2} + \frac{1}{60b^3}\), so \(\rho(C_b) = 1 - \frac{1}{2b^2} + \frac{1}{5b^3}\).
\end{enumerate}

\paragraph{3. Kendall's tau}
We evaluate \(\tau(C_b) = 1 - 4Q(b)\), where \(Q(b) = \int_0^1 \int_0^1 \partial_1 C(u,v) \, \partial_2 C(u,v) \de u \de v\).
We proceed by changing variables in the outer integral from \(v\) to \(s = s_v\). Since \(s_v\) is strictly increasing, this is a valid substitution with \(\de v = \frac{1}{s'_v} \de s\).
The integration range for \(s\) is \([s_0, s_1] = [0, 1+1/b]\).
Evaluating the partial derivative \(g_v(u) = \partial_2 C_b(u,v)\) by integrating \(h_v(t)\) yields \(g_v(u) = b s'_v L(u)\), where
\[
L(u) := \max(0, \min(u, s) - \max(0, s-1/b))
.\]
Substituting \(h_v(u) = b(s-u)\) and \(g_v(u) = b s'_v L(u)\), the inner integral becomes
\[
    I(s)
    := b^2 s'_v \int_{0 \vee (s-1/b)}^{1 \wedge s} (s-u)L(u) \de u.
\]
Let \(J(s) := \int_{0 \vee (s-1/b)}^{1 \wedge s} (s-u)L(u) \de u\). Then \(Q(b) = \int_0^{1+1/b} b^2 s'_v J(s) \frac{1}{s'_v} \de s = b^2 \int_0^{1+1/b} J(s) \de s\).

\begin{enumerate}[label=(\roman*), font=\upshape]
    \item \emph{Case \(0 < b \le 1\):} The bandwidth \(1/b\) is greater than or equal to \(1\). We split the integral over \(s\) into three parts:
    \begin{itemize}
        \item For \(s \in [0, 1]\), the range is \([0,s]\) and \(L(u) = u\). Then \(J(s) = \int_0^s (s-u)u \de u = \frac{s^3}{6}\).
        \item For \(s \in (1, 1/b]\), the range is \([0,1]\) and \(L(u) = u\). Then \(J(s) = \int_0^1 (s-u)u \de u = \frac{s}{2} - \frac{1}{3}\).
        \item For \(s \in (1/b, 1+1/b]\), the range is \([s-1/b, 1]\) and \(L(u) = u - (s-1/b)\). Then
        \[
        J(s) = \frac{(1+1/b-s)^2}{6b}(2bs - 2b + 1)
        .\]
    \end{itemize}
    Summing the integrals of \(b^2 J(s)\) over these intervals yields
    \[
        Q(b) = \frac{b^2}{24} + \left(\frac{1}{4} - \frac{b}{3} + \frac{b^2}{12}\right) + \left(\frac{b}{6} - \frac{b^2}{12}\right) = \frac{1}{4} - \frac{b}{6} + \frac{b^2}{24}.
    \]
    Thus, \(\tau(C_b) = 1 - 4Q(b) = \frac{2b}{3} - \frac{b^2}{6}\).

    \item \emph{Case \(b \ge 1\):} The bandwidth \(1/b\) is less than \(1\). We split the integral over \(s\) as follows:
    \begin{itemize}
        \item For \(s \in [0, 1/b]\), the range is \([0,s]\) and \(L(u) = u\). Then \(J(s) = \frac{s^3}{6}\).
        \item For \(s \in (1/b, 1]\), the range is \([s-1/b, s]\) and \(L(u) = u - (s-1/b)\). Then \(J(s) = \frac{1}{6b^3}\).
        \item For \(s \in (1, 1+1/b]\), the range is \([s-1/b, 1]\) and \(L(u) = u - (s-1/b)\).
        Then
        \[
        J(s) = \frac{(1+1/b-s)^2}{6b}(2bs - 2b + 1)
        .\]
    \end{itemize}
    Integrating \(b^2 J(s)\) gives
    \[
        Q(b) = \frac{1}{24b^2} + \frac{1}{6b}\left(1-\frac{1}{b}\right) + \frac{1}{12b^2} = \frac{1}{6b} - \frac{1}{24b^2}.
    \]
    Thus, \(\tau(C_b) = 1 - 4\left(\frac{4b-1}{24b^2}\right) = 1 - \frac{2}{3b} + \frac{1}{6b^2}\).
\end{enumerate}
This completes the proof.
\end{proof}

\subsection{Proof of Theorem~\ref{thexirhooptimisation}}

For the proof of Theorem~\ref{thexirhooptimisation}, we make use of the following lemma which provides a continuous transformation of an SI copula into an SD copula such that Chatterjee's rank correlation remains invariant.

\begin{lemma}[Shuffling lemma]\label{lemshuff}
    Let \(C\) be a bivariate SI copula and let \((U,V)\) be a random vector with distribution function \(C.\) For \(p\in [0,1],\) consider the transformation \(T_p\colon [0,1]\to [0,1]\) defined by
    \begin{align}
        T_p(u) := \begin{cases}
        u, &\text{if } 0\le u \le 1-p,\\
        1-(u-(1-p)), & \text{if } 1-p < u \le 1.
    \end{cases}
    \end{align}
    Denote by \(D_p\) the distribution function of \((T_p(U),V).\) Then \(D_0 = C = C^\uparrow\), \(D_1 = C_\downarrow,\) \(\xi(D_p) = \xi(C)\) for all \(p\in [0,1],\) and \(\rho(D_p)\) is continuous in \(p.\)
\end{lemma}

\begin{proof}
    Since \(U\) and \(V\) are uniform on \((0,1),\) also \(T_p(U)\) is uniform on \((0,1)\) and thus \(D_p\) is a copula.
    \(D_0 = C = C^\uparrow\) and \(D_1 = C_\downarrow\) follow from the definition of \(T_p\) and the properties of the decreasing rearranged copula in Lemma \ref{lemrearrangedcop}~\ref{lemrearrangedcop1}, where we use that \(C\) is SI.
    \(\xi(D_p) = \xi(C)\) for all \(p\in [0,1]\) follows from the invariance of \(\xi(U,V)\) under bijective transformations of \(U.\)
    Lastly, note that \((T_p(U),V)\) is continuous in \(p\) with respect to almost sure convergence. This implies weak convergence of the associated distribution functions and thus pointwise convergence of the copulas \(D_p.\) 
    Hence, the representation of \(\rho\) in \eqref{frm_rho_integral} yields the continuity of $p\mapsto \rho(D_p)$.
\end{proof}

\begin{proof}[Proof of Theorem~\ref{thexirhooptimisation}]
For a fixed \(x \in (0,1)\), we solve the convex Optimisation Problem \ref{optprob} by leveraging the theoretical framework from \cite[Chap.~3]{bonnans2013perturbation}.
To this end, we formalise the setting as 
\begin{align}\label{eqformalminprob}
    \min_{h \in X} f(h) \quad \text{subject to } G(h)\in K
\end{align}
where
\begin{itemize}
    \item the decision variable \(h = (h_v(t))\) belongs to the space \(X := L^2([0,1]^2)\),
    \item the objective function to minimise is \(f(h) = -\iint_{[0,1]^2} (1-t)h_v(t)\de t\de v \),
    \item the constraints from Optimisation Problem \ref{optprob} are represented by a mapping \(G:X \to Y\) and a closed convex cone \(K \subset Y := L^2([0,1]) \times L^2([0,1]^2) \times L^2([0,1]^2) \times \mathbb{R}\). \(G(h)\) is given by
    \[ G(h) = \left( \left(v \mapsto \int_0^1 h_v(t)\de t - v\right), -h, h-1, \iint_{[0,1]^2} h_v(t)^2\de t\de v - \frac{x+2}{6} \right). \]
    The cone \(K\) is specified as \(K := \{0_{Y_1}\}\times L_{-}^{2}\bigl([0,1]^{2}\bigr)\times L_{-}^{2}\bigl([0,1]^{2}\bigr)\times(-\infty,0]\). The monotonicity constraint \eqref{eqthexirho1b} will automatically be satisfied by our final solution.
\end{itemize}
We show that the candidate solution \(h^*=(h_v(t))\) in \eqref{h_v_clamped} is the unique global optimum by verifying the second-order sufficient condition in \cite[Thm.~3.63]{bonnans2013perturbation}. This requires finding suitable Lagrange multipliers as follows.

\emph{1) KKT Conditions and Stationarity:}
Let $Y^*$ denote the dual space of \(Y\) and $\langle\cdot,\cdot\rangle$ the duality pairing between \(Y^*\) and \(Y\).
The (generalised) Lagrangian is
\begin{align}\label{eq:generalised_lagrangian}
\begin{split}
    L(h, a, \lambda) = a f(h) + \left\langle \gamma, \left(v \mapsto \int_0^1 h_v(t)\de t - v\right) \right\rangle + \langle \alpha, -h \rangle \\
    + \langle \beta, h-1 \rangle + \mu \left( \iint_{[0,1]^2} h_v(t)^2\de t\de v - \frac{x+2}{6} \right),
    \end{split}
\end{align}
where $h\in X$, \(a\in\mathbb{R}\) and $\lambda\in Y^*$.
A pair \((a,\lambda)\in\mathbb{R}\times Y^{*}\) is a generalised Lagrange multiplier at \(x_0\) if
\begin{align}\label{eq:generalised_lagrange_multiplier}
  D_xL(x_0,a,\lambda) = 0,
  \quad
  \lambda\in N_{K}\!\bigl(G(x_0)\bigr),
  \quad
  a\ge 0,
  \quad
  (a,\lambda)\neq(0,0),
\end{align}
where \(D_x\) denotes the Fr{\'e}chet derivative and $N_K(G(x_0))$ is the normal cone (see \cite[Equation (2.97) and (2.98)]{bonnans2013perturbation})
to the convex set $K \subseteq Y$ at the point $G(x_0) \in K.$
For \(a=1,\) the conditions in \eqref{eq:generalised_lagrange_multiplier} translate into the KKT conditions for a solution point $h$ and multipliers $\lambda=(\gamma, \alpha, \beta, \mu)$:
\begin{enumerate}[label=(\roman*), font=\upshape]
    \item \emph{Primal Feasibility:} $h$ must satisfy the constraints \eqref{eqthexirho1a}, \eqref{eqthexirho2}, and \eqref{eqthexirho3}.
    \item \emph{Dual Feasibility:} The multipliers for the inequality constraints must be non-negative: $\alpha_v(t) \ge 0$, $\beta_v(t) \ge 0$, and $\mu \ge 0$.
    \item \emph{Complementary Slackness:} The product of a multiplier and its corresponding inequality constraint must be zero: $\langle \alpha, h \rangle = 0$, $\langle \beta, 1-h \rangle = 0$, and $\mu \left( 6\iint h_v(t)^2\de t\de v - 2 - x \right)=0$.
    \item \emph{Stationarity:} $D_h L(h,1,\lambda)=0$. In our case, this yields the pointwise condition
    \[
    -(1-t) + \gamma(v) + 2\mu h_v(t) - \alpha_v(t) + \beta_v(t) = 0 \quad \text{for a.e. } (t,v).
    \]
\end{enumerate}

We now verify these four conditions for our candidate solution $h_v(t) = \operatorname{clamp}(b(s_v-t), 0, 1)$ with $b=b_x$, and our choice of multipliers: $\mu = 1/(2b)$, $\gamma(v) = 1-s_v$, $\alpha_v(t) = (t-s_v)_+$, and $\beta_v(t) = (a_v-t)_+$, where $a_v=s_v-1/b$.

\begin{enumerate}[label=(\roman*), font=\upshape]
    \item \label{itm:primal_feasibility}\emph{Primal Feasibility:} $\int_0^1 h_v(t)\de t = v$ and $0 \le h_v(t) \le 1$ by Proposition \ref{propcfe}.
    Further, a direct computation shows that the definition of $b_x$ in \eqref{eq:b_and_M} and the formula for $\xi$ in Proposition \ref{propcfe} are inverses of each other, so that $6\iint h_v^2\de t\de v - 2 = x$.

    \item \emph{Dual Feasibility:} Since $x \in (0,1)$, we have $b \in (0,\infty)$, which implies $\mu = 1/(2b) > 0$, and $\alpha_v(t)$ and $\beta_v(t)$ are non-negative by definition.

    \item \emph{Complementary Slackness:}
        First, $\mu(6\iint h_v^2(t)\de t\de v - 2-x)=0$ by the observation in \eqref{itm:primal_feasibility}.
        Secondly, it is $\alpha_v(t)>0$ iff $t > s_v$. In this region, $h_v(t)=\operatorname{clamp}(b(s_v-t),0,1)=0$.
        Thus, $\alpha_v(t)h_v(t)=0$ for all \(t\in [0,1].\)
        Lastly, it is $\beta_v(t)>0$ iff $t < a_v$.
        In this region, $h_v(t)=1$. Thus, $\beta_v(t)(1-h_v(t))=0$ for all \(t\in [0,1].\)

    \item \emph{Stationarity:} We check the stationarity by considering three regions for $t$:
    \begin{itemize}
        \item If $a_v < t < s_v$, then $0 < h_v(t) < 1$. Here, $\alpha_v(t)=0$, $\beta_v(t)=0$, and $h_v(t)=b(s_v-t)$. The equation becomes
        \[
          -(1-t) + (1-s_v) + 2\frac1{2b}b(s_v-t) = 0.
        \]
        \item If $t \le a_v$, then $h_v(t)=1$. Here, $\alpha_v(t)=0$ and $\beta_v(t)=(a_v-t)$, and the equation becomes
        \[
        -(1-t) + (1-s_v) + 2 \frac1{2b} \cdot 1 - 0 + (a_v-t)
        = -s_v+1/b+(s_v-1/b) 
        = 0.
        \]
        \item If $t \ge s_v$, then $h_v(t)=0$. Here, $\beta_v(t)=0$ and $\alpha_v(t)=(t-s_v)$.
        The equation becomes
        \[
        -(1-t) + (1-s_v) + 0 - (t-s_v) + 0 = 0.
        \]
    \end{itemize}
    Hence, also the stationarity condition is satisfied for all $(t,v)$.
\end{enumerate}
Since all KKT conditions are satisfied, $h^*$ is a KKT point, and the set of Lagrange multipliers $\Lambda(h^*)$ is non-empty.

\emph{2) Second-Order Condition and Uniqueness:}
To prove that $h^*$ is a strict optimum, we apply the quadratic growth condition in \cite[Thm.~3.63]{bonnans2013perturbation}. It suffices to show that for some $\zeta > 0$,
\begin{align}\label{eqquadgrowcond}
    \sup_{(a,\lambda)\in\Lambda_{N}(h)} D_{hh}^{2}L(h,a,\lambda)(k,k) \ge \zeta\|k\|^{2} \quad \text{for all } k \in L^2([0,1]^2),
\end{align} 
where $\Lambda_{N}(h)$ is the set of normalised multipliers.
Since our optimisation problem is convex (linear objective function, convex feasible set), a strict local minimum \(h\) is also the unique global minimum in \eqref{eqformalminprob}.
To verify \eqref{eqquadgrowcond}, the Hessian of the Lagrangian is given by
\[
  D_{hh}^{2}L(h,a,\lambda)[k,k] = a D_{hh}^2 f[k,k] + \langle \lambda, D_{hh}^2 G[k,k] \rangle. 
\]
As the objective function $f$ and all constraints except the one on $\xi$ are linear in $h$, their second derivatives vanish. The only contribution comes from the quadratic constraint term
\[
g(h) := \int_0^1\int_0^1 h_v(t)^2\de t \de v - \frac{x+2}{6}.
\]
Its Hessian is $D_{hh}^2 g(h)[k,k] = 2\|k\|^2.$
In step 1, we gave a KKT multiplier $\lambda=(\gamma,\alpha,\beta,\mu)$ of finite norm with $\mu > 0$.
Therefore, for the normalised generalised multiplier $(1/\sigma, \lambda/\sigma) \in \Lambda_N(h^*)$ with $\sigma:= 1+ \left\lVert (\gamma,\alpha,\beta,\mu)\right\rVert_{Y^*}<\infty$, we have
\begin{align}\label{seuffsecordcond}
    D_{h^*h^*}^{2}L(h,a,\lambda)[k,k] = \frac{2\mu}{\sigma} \|k\|^2
,\end{align}
which shows that the quadratic growth condition \eqref{eqquadgrowcond} is indeed satisfied.

\emph{Boundary Cases:} The cases $x=0$ and $x=1$ are handled by taking the limits $b \to 0$ and $b \to \infty$ respectively, for which $C_b$ converges to the independence copula $\Pi$ and the upper Fréchet copula $M$. This establishes that for each $x \in [0,1]$, the copula $C_{b_x}$ is the unique optimiser.
\end{proof}

\subsection{Final proof of Theorem~\ref{thexirho}}

We are now ready to prove our first main result.

\begin{proof}[Proof of Theorem~\ref{thexirho}:]
For \((x,y)\in \mathcal{R},\) we determine a copula \(C\) with \(\xi(C) = x\) and \(\rho(C) = y.\)
For \(b = b_x,\) let \(C_{b}\) be the unique SI copula with \(\xi(C_b) =  x\) and \(\rho(C_b) = M_x\) obtained from Theorem~\ref{thexirhooptimisation}.
Consider the family \((C_{b,p})_{p\in [0,1]}\) of copulas constructed in Lemma~\ref{lemshuff}, i.e., for \((U,V)\sim C_b,\) the copula \(C_{b,p}\) is the distribution function of \((T_p(U),V).\)
Then, we have \(C_{b,0} = C_b = C_b^\uparrow\) and \(C_{b,1} = C_b^\downarrow.\) Applying Lemma \ref{lemrearrangedcop} \ref{lemrearrangedcop2}, we obtain
\[
\rho(C_{b,1})  = -\rho(C_{b,0}) = - M_x.
\]
Further, applying Lemma~\ref{lemshuff}, we obtain that \(\xi(C_{b,p}) = x\) for all \(p\in [0,1].\)
Since \(\rho(C_{b,p})\) is continuous in \(p\), there exists a parameter \(p^*\in [0,1]\) such that the copula \(C:= C_{b,p^*}\) satisfies the desired properties \(\xi(C) = x\) and \(\rho(C) = y.\)

We are left with verifying the convexity of the set \(\mathcal{R},\) for which it is enough to show that the function $M_x$ is concave on \([0,1].\)
Fix two points \(x_1,x_2\in[0,1]\) and choose copulas \(C_1,C_2\) such that \(\xi(C_i)=x_i\) and \(\rho(C_i)=M_{x_i}.\)
Set \(x^\ast:=\lambda x_1+(1-\lambda)x_2\) and consider the convex combination \(C_\lambda=\lambda C_1+(1-\lambda)C_2.\)
Then, we have
\(
  \rho(C_\lambda)
  =\lambda M_{x_1}+(1-\lambda)M_{x_2}
\)
and
\(
  \xi(C_\lambda)
  \le x^\ast
\)
by Lemma~\ref{lem:convexity_xi}.
Since the optimal solution in Theorem~\ref{thexirhooptimisation} solves Optimisation Problem \ref{optprob}, it follows that $M_x$ is increasing in \(x.\) Hence, it holds that \(M_{\xi(C_\lambda)}\le M_{x^\ast}\) whereas, by maximality, \(\rho(C_\lambda)\le M_{\xi(C_\lambda)}.\)
Consequently,
\[
   \lambda M_{x_1}+(1-\lambda)M_{x_2}
   =\rho(C_\lambda)
   \le M_{x^\ast}
   =M_{\lambda x_1+(1-\lambda)x_2}, 
\]
which shows the concavity of $M$ as desired.
\end{proof}

\section{Proof of Theorem~\ref{thm:rho_ge_xi}}\label{sec6}

In this section, we provide the proof of our second main result. 
To this end, we make use of the following lemma.

\begin{lemma}\label{lem:fanlorentz_ineq}
For \(v\in [0,1],\) let $h:[0,1]\rightarrow[0,1]$ be decreasing with $\int_{0}^{1}h(t)\de t= v.$
Then, for $F_v(h) = \int_{0}^{1}\!\bigl(2(1-t)h(t)-h(t)^2\bigr)\de t,$ it holds that
\(
  F_v(h)\;\ge\;v(1-v)
,\)
with equality if and only if $h(t)=\1_{\{t\le v\}}$ or $h(t)=v$ for all \(t\in [0,1].\)
\end{lemma}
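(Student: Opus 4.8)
The plan is to minimize the concave functional $F_v$ over the convex feasible set and read off the bound at its extreme points. First I would rewrite the integrand by completing the square, $2(1-t)h-h^2=(1-t)^2-(h-(1-t))^2$, which gives
\[
  F_v(h)=\int_0^1(1-t)^2\de t-\int_0^1\bigl(h(t)-(1-t)\bigr)^2\de t=\tfrac13-\int_0^1\bigl(h(t)-(1-t)\bigr)^2\de t .
\]
In particular $F_v$ is \emph{strictly} concave on $L^2([0,1])$. Let $A$ denote the set of decreasing $h\colon[0,1]\to[0,1]$ with $\int_0^1 h=v$; this set is convex and weakly compact, so by the maximum principle the minimum of the concave functional $F_v$ over $A$ is attained at an extreme point of $A$. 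Hence it suffices to evaluate $F_v$ on the extreme points.

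Next I would identify these extreme points. For a decreasing $h\ge 0$ with $\int_0^1 h=v$, the bound $h\le 1$ is equivalent to $\int_0^x h\le\min\{x,v\}$ for all $x$, i.e.\ to $h\prec_S \1_{\{t\le v\}}$; thus $A=\{h\ge 0 : h\text{ decreasing},\ h\prec_S \1_{\{t\le v\}}\}$. Applying Lemma \ref{lem:extreme_points} with $f=\1_{\{t\le v\}}$ and using that averaging $f$ over an interval changes its value only on an interval straddling $v$ (of which there can be at most one), the extreme points are exactly the two-step functions
\[
  h_{a,c}(t)=\1_{[0,a)}(t)+\frac{v-a}{c-a}\,\1_{[a,c)}(t),\qquad 0\le a\le v\le c\le 1 ,
\]
together with the degenerate cases $\1_{\{t\le v\}}$ (for $a=c=v$) and the constant $v$ (for $a=0,\,c=1$). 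All of these are nonnegative, hence lie in $A$.

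Finally I would compute. Writing $x:=v-a\ge 0$ and $y:=c-v\ge 0$ (so that $c-a=x+y\le 1$) and carrying out the two elementary integrals, the difference collapses to the factorized form
\[
  F_v(h_{a,c})-v(1-v)=\frac{x\,y\,(1-x-y)}{x+y}\;\ge\;0 ,
\]
since $x,y\ge 0$ and $x+y\le 1$. This proves $F_v(h)\ge v(1-v)$. For the equality statement, strict concavity forces any minimizer to be an extreme point, and the right-hand side above vanishes precisely when $x=0$ or $y=0$ (both giving $h=\1_{\{t\le v\}}$) or when $x+y=1$, i.e.\ $a=0,\,c=1$ (giving $h\equiv v$); this yields exactly the two cases in the statement.

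The step I expect to be the main obstacle is the extreme-point characterization. Lemma \ref{lem:extreme_points} describes the extreme points of the Schur set $\{g\text{ decreasing}: g\prec_S f\}$, which is strictly larger than $A$ because it omits the constraint $g\ge 0$ (and is in fact unbounded below for $F_v$). One must therefore check that adjoining $h\ge 0$ creates no new extreme points beyond the nonnegative ones listed above, i.e.\ that every extreme point of $A$ is of two-step form. This can be secured from the nonnegativity of the averages of $f\ge 0$, or by a direct perturbation argument: a decreasing $[0,1]$-valued function that takes two distinct interior plateau values, or is strictly decreasing on some subinterval of $(0,1)$, can be perturbed within $A$ while preserving both monotonicity and the integral constraint, and is thus not extreme.
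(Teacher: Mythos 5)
Your proposal is correct and follows essentially the same route as the paper's own proof: Bauer's maximum principle for the concave functional $F_v$ over the convex, weakly compact set of decreasing functions, the extreme-point characterization of Lemma \ref{lem:extreme_points} applied with $f=\1_{\{t\le v\}}$, and explicit evaluation at the two-step extreme points. Two minor remarks: your completing-the-square identity makes the strict concavity explicit and thereby settles the equality case more cleanly than the paper (which loosely asserts that a non-extreme point is a midpoint of two \emph{extreme} points), and your factorized expression $\frac{xy(1-x-y)}{x+y}$ is in fact the correct one---the paper's display \eqref{eq:extreme_point_F_v} contains a typo, with $(s'-s)^2$ in the denominator where it should be $s'-s$---though neither discrepancy affects the validity of the result.
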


\begin{proof}
Put
\[
    A_v\;:=\;\Bigl\{h:[0,1]\to[0,1]\;\Bigm|\;
         h\text{ decreasing and }\int_{0}^{1}h(t)\de t = v \Bigr\}.
\]
The set $A_v$ is convex and weakly compact in $L^\infty([0,1]).$
Write $F_v(h)=\int_{0}^{1}\phi_t\!\bigl(h(t)\bigr)\de t$ with
\(
    \phi_t(x) := 2(1-t)x-x^{2}
.\)
Since $\phi_t''(x)=-2<0,$ each $\phi_t$ is concave in $x$; hence $F_v$ is a concave functional on $A_v.$
Further, $F_v$ is continuous on $A_v,$ so Bauer's maximum principle applies, which states that the minimum of $F_v$ on $A_v$ is attained at an extreme point of this set, see, e.g., \cite[Theorem~16.6]{phelps2002lectures}.
For $0\le s \le v \le s' \le 1,$ let
\[
  g_{s,s'}(t) = \begin{cases}
    \1_{[0,v]}(t), & \text{if } t\in[0,s)\cup[s',1], \\
    \frac{\int_s^{s'} \1_{[0, v]}(t) \de t}{s'-s}, & \text{if } t\in [s,s').
  \end{cases}
\]
Then, by \cite[Theorem 1]{kleiner2021extreme}, the set of extreme points of $A_v$ is given by
\(
  \operatorname{ext}(A_v) = \{g_{s,s'} : 0\le s \le v \le s' \le 1\} 
.\)
If $g_{s,s'} \in \operatorname{ext}(A_v)$ with $s<s',$ then
\[
  F_v(g_{s,s'})
  = \int_0^{s} 2(1-t)\1_{[0,v]}(t) - \1_{[0,v]}(t)^2 \de t + \int_s^{s'} 2(1-t)\tfrac{\int_s^{s'} \1_{[0, v]} \de u}{s'-s} - \big(\tfrac{\int_s^{s'} \1_{[0, v]} \de u}{s'-s}\big)^2 \de t
.\]
Since $s\le v,$ the first integral on the right-hand side evaluates to
\(
    \int_0^{s} 2(1-t)\1_{[0,v]}(t) - \1_{[0,v]}(t)^2 \de t
    =s(1-s).
\)
For the second integral, it is
\[
\int_s^{s'} \! 2(1-t)\,\frac{v-s}{s'-s} 
  \;-\;\left(\frac{v-s}{s'-s}\right)^2\de t
= (v-s)\left(2 - s' - s\right)
  \;-\;\frac{(v-s)^2}{s'-s}.
\]
Hence, altogether
\begin{align}
  \label{eq:extreme_point_F_v}
  \begin{aligned}
  F_v(g_{s,s'})
  = v(1-v)  + \frac{(v - s)(s' - v)(1 + s - s')}{s' - s}
  \ge v(1-v)
  .\end{aligned}
\end{align}
The case of \(s=s'\) is trivial and yields \(F_v(g_{s,s'})=v(1-v),\) establishing the desired inequality.
Furthermore, for equality to hold in \eqref{eq:extreme_point_F_v}, we need either $(v-s)(s'-v)=0$ or $1+s-s'=0.$
In the first case, since $\int_0^1 g_{s,s'}(u)\de t=v,$ it must be $s=s'=v$ and thus $g_{s,s'}(t)= \1_{[0,v]}(t),$ whilst in the second case, it is $g_{s,s'}(t) = v.$
Lastly, if $h\in A_v$ is not an extreme point, then there exist $g_1,g_2\in \operatorname{ext}(A_v)$ such that $h=\frac{g_1+g_2}2$ and on a set $A\subset[0,1]$ of positive Lebesgue-measure it holds $g_1\neq g_2.$ 
Hence, by the strict convexity of $x\mapsto x^2,$ it is
\[
  F_v(h)
  > \frac{F_v(g_1) + F_v(g_2)}2
,\]
hence finishing the proof.
\end{proof} 

\begin{proof}[Proof of Theorem~\ref{thm:rho_ge_xi}.]
For \(v\in [0,1],\) write $h_v(t):=\partial_1 C(t,v)$ for all \(t\) for which the partial derivative exists.  
If \(C\) is SI, $h_v(t)$ is decreasing in $t$ almost everywhere and 
\[
  \int_{0}^{1}h_v(t)\de t = \int_0^1 \partial_1 C(t,v) \de t = C(1,v) - C(0,v) = v.
\]
Hence, Lemma~\ref{lem:fanlorentz_ineq} yields
\begin{align}\label{eq:fanlorentz2}
  \rho(C) - \xi(C)
  = 6\int_{0}^{1} F_v(h_v) \de v-1
  \ge 6 \int_0^1 v(1-v) \de v - 1 = 0
\end{align}
as desired.
If \(C\in \{\Pi,M\},\) then \(F_v(h_v) = v(1-v)\) for all \(v,\) so \(\rho(C) = \xi(C).\)
Conversely, if $C\notin\{\Pi,\,M\},$ then there exists a set of positive Lebesgue measure $A\subset[0,1]$ such that for each $v\in A$ the function $h_v$ differs from both $\1_{\{\cdot \le v\}}$ and the constant function $v$ on a subset of $[0,1]$ of positive Lebesgue measure.
Thus, by Lemma~\ref{lem:fanlorentz_ineq}, for each such $v\in A,$ it holds that $F_v(h_v) > v(1-v),$ which implies that the inequality in \eqref{eq:fanlorentz2} is also strict, establishing that $\Pi$ and $M$ are the only SI copulas with $\xi(C) = \rho(C).$

If $C$ is SD, then, by Lemma \ref{lemrearrangedcop}, it is $\xi(C) = \xi(C^\uparrow)$ and $\rho(C) = -\rho(C^\uparrow).$
Since $C^{\uparrow}$ is SI, the inequality proved above yields $0\le \rho(C^{\uparrow})-\xi(C^{\uparrow}).$ It follows that
\(
   0\le-\rho(C)-\xi(C)
,\)
which implies
\(
  \xi(C)\le -\rho(C)=|\rho(C)|.
\)
If equality holds, then $\rho(C^{\uparrow})=\xi(C^{\uparrow}),$ which is possible only when $C^{\uparrow}\in\{\Pi,\,M\},$ that is, $C\in\{\Pi,\,W\}.$
Combining the SI and SD cases completes the proof of Theorem~\ref{thm:rho_ge_xi}.
\end{proof}



\section*{Acknowledgement}

We thank the Associate Editor, an anonymous referee, and Sebastian Fuchs for valuable comments that improved the quality of the manuscript.
In particular, these comments led to Remark~\ref{remmain}\,\ref{remmainF} and to the discussion of the quantity \(\sqrt{\xi}\).
The first author gratefully acknowledges the support of the Austrian Science Fund (FWF) projects 10.55776/PAT1669224 ''SORT: Stochastic orders for functional dependence`` and
{P 36155-N} ''ReDim: Quantifying Dependence via Dimension Reduction`` as well as the support of the WISS 2025 project ''IDA-lab Salzburg`` (20204-WISS/225/197-2019 and 20102-F1901166-KZP).

\bibliographystyle{elsarticle-harv}
\bibliography{xi-rho-region_arxiv}

\end{document}